\documentclass[11pt]{amsart}
\usepackage{amsmath,amsthm,amsfonts,latexsym,amssymb}
\usepackage{graphicx}
\usepackage[arrow, matrix, curve]{xy}
\usepackage{tikz-cd}
\usepackage{float}
\usepackage{hyperref}
\usepackage{xcolor}

\newcommand{\C}{\mathbb{C}}
\newcommand{\R}{\mathbb{R}}

\newtheorem{theorem}{Theorem}[section]
\newtheorem{lemma}[theorem]{Lemma}
\newtheorem{proposition}[theorem]{Proposition}

\theoremstyle{definition}

\newtheorem{example}[theorem]{Example}
\newtheorem{remark}[theorem]{Remark}

\newtheorem{definition}[theorem]{Definition}

\makeatletter
\@namedef{subjclassname@2020}{%
	\textup{2020} Mathematics Subject Classification}
\makeatother

\title{Classification of Legendrian doubles and suspensions}
\author{Yasemin Yildirim}
\address{Department of Mathematics, Uppsala University, Box 480, 751 06, Uppsala, Sweden} 
\address{Department of Mathematics, TED University, Ankara, Turkiye}
\email{yasemin.yildirim@math.uu.se, yyildirim@tedu.edu.tr}

\subjclass[2020]{53D10, 53D12, 57R17, 57R65}
\date{\today}
\begin{document}
\begin{abstract}
We define a construction of Legendrians inside contact manifolds that arise by doubling an exact Lagrangian filling in the page of an open book decomposition. This can be seen as a generalization of a previous construction by Courte and Ekholm  to arbitrary open books. These Legendrians, called Legendrian doubles, are shown to admit regular flexible exact Lagrangian fillings, and they are thus classified up to Legendrian isotopy by classical data. Finally, we show that the Legendrian suspension construction, as defined by Arikan and the author in  previous work,—this is a Legendrian contained inside a page of an open book that is obtained by using Seidel’s suspension of Lefschetz fibrations— is a Legendrian double.
\end{abstract}
\maketitle
\section{Introduction}

The Legendrian submanifolds of a contact manifold constitute a very rich set of objects. The relation of Legendrian embeddings up to Legendrian isotopy, i.e.~the classification of Legendrians up to an ambient contact isotopy, is far from well understood. However, there are two very particular classes of Legendrian submanifolds for which more can be said, for the reason that their classical contact invariants are enough to determine their Legendrian isotopy class; namely, the Legendrians that are loose in the sense of Murphy \cite{Murphy}, and the Legendrians that admit a flexible exact Lagrangian filling, a notion which only exists in contact manifolds of dimensions at least $5$ which was introduced by \cite{EliashbergGanatra}. The class of loose Legendrians satisfy an $h$-principle by Murphy's work, while the second ones admit exact Lagrangian fillings that satisfy an $h$-principle \cite{EliashbergGanatra}. In both cases, once it has been verified that the Legendrian is in the class, one can use softer techniques from contact topology in order to determine its Legendrian isotopy class. Note that the loose Legendrians are known not to admit any exact Lagrangian fillings inside any symplectic filling of the contact manifold. On the other hand, the class of flexible fillable Legendrians should be considered as the most simple ones of those that admit exact Lagrangian fillings.

Here we provide a new construction of closed Legendrian submanifolds of a contact manifold $(Y,\xi)$ that we call Legendrian doubles. The construction starts with the choice of a compatible contact open book decomposition of the contact manifold $(Y,\xi)$. Recall that, by the work of Giroux \cite{Giroux2002}, there are always plenty of such open book decompositions. We then fix an exact Lagrangian submanifold $L \subset ({P},\eta)$ of a page of the open book that is cylindrical near the boundary (recall that the boundary of the page is equal to the binding of the open book). This Lagrangian $L$ can be doubled by adjoining a copy of $L$ in the opposite page, which makes an 180 degree angle with the page that we started with. The closed and smooth submanifold obtained is diffeomorphic to $L \cup_{\partial L} L$, and becomes a Legendrian submanifold $L_{Leg}^d$ after a suitable perturbation away from the binding, see Lemma \ref{lemma_def_ob}. Note that this perturbation is not necessary in the case when $L$ is strongly exact (for definition see Section \ref{prelim}). We call the latter the Legendrian double of $L$. See  Definition \ref{definition:Leg_double} and Section \ref{doubling process} for more details.

Our main result concerning the Legendrian doubles is that they admit flexible regular exact Lagrangian fillings. In other words, they are Legendrians that admit exact Lagrangian fillings of the simplest types. 
\begin{theorem}\label{theorem:flexible_filling}
    If $L$ is an exact Lagrangian filling of dimension at least $2$ of a Legendrian submanifold $\Lambda$ in the   page ${P}$ of an open book decomposition of the contact manifold $(Y,\xi)$, then the Legendrian submanifold $L_{Leg}^d$ obtained by the double of  this filling bounds a regular flexible exact Lagrangian filling in the symplectization. Moreover, the Legendrian isotopy class of the double only depends on the formal Lagrangian isotopy class of $L$.
\end{theorem}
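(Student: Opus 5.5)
The strategy is to imitate Courte--Ekholm: view the open book as a supporting structure in which the double sweeps out a Lagrangian, then identify that Lagrangian as a Weinstein handle attachment to a trivial cylinder. Using a Giroux-compatible Weinstein-type structure, the symplectization (or a filling neighbourhood) of $(Y,\xi)$ near the open book is modelled on $P\times D^2$ near the binding and on a mapping torus elsewhere. The candidate filling $\Sigma$ is obtained by sweeping $L$ through the half-plane pieces. Its slices are the Legendrian lifts of $L$ in the pages at angles $\theta\in[0,\pi]$; together these form a family of Legendrians that bound the half-disc family $L\times\{\text{half-disc}\}$. Concretely, I would take
$$\Sigma \cong L\times[0,1],$$
with one boundary component equal to $L^d_{Leg}$ and the $[0,1]$-direction corresponding to rotating $L$ from the page at angle $0$ to the page at angle $\pi$ through the interior of the filling. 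Dually, I would realise the double as the boundary of a Lagrangian obtained from a trivial Lagrangian disc by attaching handles.

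\textbf{Step 1.} Start with the cylindrical-end structure of $L$: since $L$ is an exact filling of $\Lambda\subset\partial P$, it carries a Morse function with no interior maxima. Cancel to a handle decomposition of $L$ with handles of index at most $n-1$ relative to nothing (or starting from a $0$-handle).

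\textbf{Step 2.} Show that each $k$-handle of $L$ produces, in the symplectization $\mathbb{R}\times Y$, a Lagrangian handle attachment of index $k+1$ for the filling of the double. The local model is the Courte--Ekholm computation, in which the double of a $k$-handle in $T^*\mathbb{R}^{n}$ is a $(k+1)$-handle attached along a loose Legendrian sphere. The $0$-handle gives the standard Legendrian unknot filled by a disc.

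\textbf{Step 3.} Verify looseness of each attaching sphere. The attaching region lies in a neighbourhood that is a stabilisation (a product with a page direction), so a loose chart appears provided $\dim L\ge 2$; this is where the hypothesis on the dimension is used. This yields a regular flexible exact Lagrangian filling in the sense of Eliashberg--Ganatra--Lazarev.

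\textbf{Step 4.} For the uniqueness statement, note that a formal Lagrangian isotopy of $L$ induces a formal isotopy of the flexible fillings. By the $h$-principle for flexible Lagrangians, these fillings are then genuinely isotopic through regular Lagrangians with Legendrian boundary, and restricting to the boundary gives a Legendrian isotopy of the doubles.

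I expect the main obstacle to be Step 3 combined with the global gluing: one must show that the handle-by-handle model from a Darboux chart survives the monodromy of the open book, so that the attaching spheres really are loose in the complement of the previously built filling. To address this, I would first isotope $L$ so that all its handles lie in a collar of the binding where the open book is trivial ($P\times D^2$). There the construction is literally a product with the Courte--Ekholm local model and the monodromy does not interfere.
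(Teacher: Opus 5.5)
You have Step 1 right, and your identification $\Sigma\cong L\times[0,1]$ is right. The gap is Step 2: showing that $L^d_{Leg}$ is, up to Legendrian isotopy, the boundary of a filling built from unknots by Lagrangian handles. That is where all the content of the theorem sits, and you only assert it through a ``local model'' that is misstated.

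\textbf{The index count is off by one.} A $k$-handle of $L$ contributes a $k$-handle, not a $(k+1)$-handle, to $\Sigma\cong L\times[0,1]$; your own $0$-handle case, where the unknot is filled by a disc, already shows this. Passing from $L$ to $L\cup h^k$ changes $L\cup_{\partial L}L$ by a $(k-1)$-surgery. This surgery is realised by a Legendrian ambient surgery along an isotropic $k$-disk, whose elementary cobordism is a Lagrangian $k$-handle.

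\textbf{The globalisation step fails as stated.} The neighbourhood of the binding is $\partial P\times D^2$, not $P\times D^2$. Moreover, $L$ cannot in general be isotoped into a collar of $\partial P$. For example, take $P=D^*S^n$ and $L$ a cotangent fibre: $[L]\in H_n(P,\partial P)$ has intersection number $\pm1$ with the zero section, so $L$ cannot even smoothly be pushed off the skeleton. The monodromy is also not the real obstacle, since by Lemma~\ref{lemma_def_ob} it can be concentrated near a page outside $\theta\in[0,\pi]$.

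\textbf{The missing idea is the sweep you mention and then abandon.} The set $\mathcal{L}=\bigcup_{\theta\in[0,\pi]}L_\theta$ lives in $Y$, not in $\mathbb{R}\times Y$. It is a pre-Lagrangian whose characteristic foliation is of half open book type with Legendrian pages $L_\theta$ (Lemma~\ref{lemma:pre_lag}). The Legendrian isotopy class of the boundary of such an object is determined by a single page $L_{\pi/2}$ (Theorem~\ref{theorem:flex}). This moves the whole problem into the jet neighbourhood $J^1L_{\pi/2}$. There the double is the two-sheeted front $j^1(\pm F)$ with cusp edge along $\partial L$. The filling is then written down explicitly by ambient surgeries on unknots placed at the maxima of a Morse function on $L$, along isotropic disks in the zero section (Theorem~\ref{thm:existence}). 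No handle-by-handle gluing inside the open book is needed.

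\textbf{Step 3 is unnecessary and unsupported.} It comes from the index shift. Since $L$ has no $n$-handles (your Step 1), all Lagrangian handles have index $\le n-1<n$. A decomposable filling with only such handles is flexible by Eliashberg--Ganatra--Lazarev without exhibiting any loose chart. Your claim that a loose chart ``appears'' from a product with a page direction has no argument behind it. The hypothesis $\dim L\ge 2$ is there so that $\mathbb{R}\times Y$ has dimension at least $6$, where flexibility is defined.

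\textbf{Step 4 overstates the flexible $h$-principle.} It yields a symplectomorphism carrying one flexible filling to the other, not an isotopy of fillings with moving Legendrian boundary. You would still need the induced contactomorphism at infinity to be contact isotopic to the identity. Uniqueness follows more directly from the same reduction. The double is determined by the Legendrian page $L_{\pi/2}$, which has non-empty boundary and is therefore governed by the $h$-principle, and its formal class is fixed by the formal Lagrangian class of $L$.
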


 The uniqueness of the Legendrian isotopy class more or less follows from the work of \cite{CE}. Also, see \cite{roy2022constructions} for the case when the Lagrangian filling is a disk. 
 
 In fact, we prove a possibly even stronger statement. Namely, the produced by Theorem \ref{thm:flex_trivial_fib} filling is a  \emph{decomposable Lagrangian filling}.

In high dimensions, \emph{decomposability}, \emph{regularity}, and \emph{flexibility} are distinct but related notions. A Lagrangian filling is called \emph{decomposable} if it can be constructed from the standard filling of unlinked standard Legendrian spheres by a finite sequence of elementary Lagrangian cobordisms, i.e.~ Legendrian isotopies and standard Lagrangian handle attachments. This notion was first introduced by \cite{chantraine2010concordance} in 4-dimensonal symplectic cobordisms, but it can be generalized to higher dimensions by using the notion of standard Lagrangian handle, \cite{Dimitroglou:Ambient}. The  notion of regular Lagrangian cobordisms is more recent and it goes back to \cite{EliashbergGanatra}.  Roughly speaking, a cobordism is \emph{regular} if its complementary Liouville cobordism is Weinstein. When the complementary cobordism is moreover flexible, which makes sense in symplectic cobordisms of dimension at least 6, we call the Lagrangian \emph{flexible}. Such flexible Lagrangian cobordisms satisfy an $h$-principle, as shown in the same paper. There is a stronger relationship between the notions of decomposable Lagrangian cobordisms and regular Lagrangian cobordisms as shown in \cite{conway2021symplectic} in low dimensions and \cite[Proposition 3.2]{DRG} in high dimensions. Note that, decomposable $n+1$-dimensional Lagrangian cobordism for which all Lagrangian standard handles are of index $<n$ is flexible by \cite{EliashbergGanatra}. Thus, decomposability is primarily a statement about the construction of the filling, whereas regularity concerns its compatibility with the ambient Liouville structure. By contrast, \emph{flexibility} is a geometric property associated with the underlying Weinstein structure of the complementary cobordism, characterized by the presence of loose Legendrian attaching spheres for its critical handles. Flexible Weinstein structures therefore satisfy an \(h\)-principle, making their associated Lagrangian geometry considerably more tractable. It is an open problem whether all regular cobordisms are decomposable, but the flexible regular ones can be shown to be decomposable by alluding to the $h$-principle from \cite{EliashbergGanatra}. In the same paper, the authors conjectured that every exact Lagrangian filling inside a Weinstein domain is regular.

In the case when the open book decomposition is induced by trivial Lefschetz fibration $\operatorname{proj}_\C:\hat{P}\times \C \to \C$, then doubling a strongly exact filling $L\subset (P,\eta)$ produces the ideal boundary at infinity of $\hat{L}\times \R \subset \hat{P}\times \C$. Then we have the following result.
\begin{theorem}\label{thm:flex_trivial_fib}
  The product Lagrangian $\hat{L}\times \R \subset \hat{P} \times \C $ is a complete flexible Lagrangian filling of $L$ with boundary given by the Legendrian double $L^d_{Leg}$ created using the trivial open book with  Weinstein page $(P,\eta)$.  
\end{theorem}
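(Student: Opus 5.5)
The strategy is to split the claim into three parts. First, $\hat{L}\times\R$ is an exact Lagrangian in the completed Liouville manifold $\hat{P}\times\C$ that is cylindrical at infinity. Second, its ideal boundary is the Legendrian double $L^d_{Leg}$. Third, the complement of $\hat{L}\times\R$ carries a flexible Weinstein structure, equivalently the filling is regular and flexible in the sense of \cite{EliashbergGanatra}.

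For exactness and the boundary, equip $\hat{P}\times\C$ with the Liouville form $\lambda_P+\tfrac12(x\,dy-y\,dx)$. Restricted to $\hat{L}\times\R$ (the real axis $y=0$), this is $\lambda_P|_{\hat L}$, which is exact because $L$ is (strongly) exact. The Liouville vector field is $Z_P+\tfrac12(x\partial_x+y\partial_y)$. Since $\hat L$ is cylindrical at infinity in $\hat P$, the field is tangent to $\hat L\times\R$ outside a compact set, so the product is asymptotically conical. For the ideal boundary, use the standard identification of the contact boundary of $P\times D^2$ (after rounding corners) with the open book whose binding is $\partial P$, whose pages are $P\times\{e^{i\theta}\}$ and whose monodromy is trivial. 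Then $\hat L\times\R$ meets the boundary in $L\times\{1\}\cup L\times\{-1\}$, glued along $\partial L\times\{0\}\subset\partial P\times D^2$. These are the pages at angles $0$ and $\pi$, which is exactly the doubling recipe of Section~\ref{doubling process}. Because $L$ is strongly exact, Lemma~\ref{lemma_def_ob} says no perturbation is needed, so the boundary is $L^d_{Leg}$ on the nose. Some care is needed near the corner $\partial P\times\partial D^2$, to check that smoothing the corners sends the product to the cylindrical piece $\partial L\times$(arc); I would handle this with a Liouville flow that is compatible with the rounding.

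For flexibility, view the complement via Lefschetz fibration/handle decomposition. Take a Weinstein handle decomposition of $P$ adapted to $L$: since $L$ is an exact filling in a Weinstein page, I would build the Weinstein structure on $P$ relative to $L$ (regularity of $L$ inside $P$, or a direct adapted Morse function $\phi_P$ with $L$ as a union of stable manifolds). Then use $\phi=\phi_P+|z|^2$, or better $\phi_P+x^2-y^2+C y^2$ twisted so that $\hat L\times\R$ is a sublevel-compatible Lagrangian. The cleaner route is the stabilization principle. $P\times\C$ is the subcritical stabilization of $P$. In $P\times\C$, every handle of index $k$ of $P$ becomes index $k$ in dimension $2n+2$, so all handles are subcritical and therefore flexible. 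The complement of $\hat L\times\R$ is then $(P\setminus L)\times\C$ glued appropriately, and its handles come from handles of $P$ relative to $L$, of index at most $n$ in a $(2n+2)$-dimensional manifold, hence subcritical, with the single exception of index-$(n+1)$ handles created by cutting along the real line. I would show these are attached along loose Legendrians, because the attaching spheres sit inside a stabilized piece $P\times$(half-disc) where Legendrians acquire a loose chart, as in Murphy's stabilization criterion. Alternatively, I would invoke the decomposability criterion quoted from \cite{EliashbergGanatra}. Decompose $\hat L\times\R$ as Lagrangian handles obtained from the handles of $L$ (index $\le \dim L=n$) crossed with $\R$. These yield standard Lagrangian handles of index $\le n<n+1$ in the $(n+1)$-dimensional cobordism, so the filling is flexible by the quoted result.

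The main obstacle is this last step: exhibiting $\hat L\times\R$ explicitly as a decomposable cobordism built from a handle decomposition of $L$. Concretely, one has to check that a Morse function $f$ on $L$ induces a Lagrangian handle decomposition of the product whose indices match those of $f$, so that every index is at most $n$. I would try to realize this through the trace of the Legendrian double under a family of open-book moves, attaching one handle of $L$ at a time in the pages at angles $0$ and $\pi$ simultaneously. The hypothesis $\dim L\ge 2$ is used here to ensure the flexibility regime (ambient dimension $\ge 6$).
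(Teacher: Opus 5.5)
Your treatment of exactness and of the ideal boundary matches the paper's. The paper uses the level set $\{g+\tfrac{|z|^2}{2}=R\}$ rather than rounding the corners of $P\times D^2$, but the content is the same. One justification needs correcting. Cylindricity of $\hat L$ alone does not make the Liouville field tangent to $\hat L\times\R$ outside a compact set. At points $(p,x)$ with $p$ in the compact part of $L$ and $|x|\gg 0$, tangency is exactly the condition $\eta|_{TL}=0$. So what you are really using there is strong exactness, arranged as in Lemma \ref{lemma_def_ob}.

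The genuine gap is in the flexibility part.

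\emph{First route.} It needs a Weinstein structure, or an adapted Morse function $\phi_P$, on $P$ for which $L$ is a union of stable manifolds. In other words, it assumes $L$ is regular in $P$. That is not a hypothesis of the theorem. The paper stresses that the statement is ``more or less immediate when $L$ is regular'' and that the point is to avoid this assumption. Whether every exact filling is regular is the open conjecture of \cite{EliashbergGanatra}. In addition, the description of the complement as $(P\setminus L)\times\C$ and the looseness of the putative index-$(n+1)$ handles are asserted rather than derived.

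\emph{Second route.} This is where you yourself locate the main obstacle, and you leave it open. The mechanism you suggest, attaching handles of $L$ one at a time inside the pages $P_0$ and $P_\pi$, again presupposes that $L$ is built from standard Lagrangian handles inside $P$. That is the same unavailable regularity or decomposability of $L$ in $P$. Separately, the quoted criterion requires Lagrangian handles of index $<n$ in the $(n+1)$-dimensional filling, not $\le n$. You would therefore need a Morse function on $L$ without interior maxima, which exists as long as every component of $L$ has nonempty boundary.

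The missing idea is to never decompose anything inside $P$. The paper proceeds as follows.
\begin{enumerate}
\item Lemma \ref{lemma:pre_lag} shows that $\bigcup_{\theta\in[0,\pi]}L_\theta$ is an exact pre-Lagrangian filling of $L^d_{Leg}$ of half open book-type, with page $L$.
\item Theorem \ref{thm:existence}, with the uniqueness of Theorem \ref{theorem:flex} (both resting on the $h$-principle for the Legendrian page with boundary), then places the double in a standard jet neighborhood $J^1L$ of one half. There its front is two sheets $\pm F$ joined along a cusp edge over $\partial L$.
\item In that model, a Morse function on $L$ alone yields a decomposable filling by standard handles of index $<n$, living in $\R\times J^1L$. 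Only the intrinsic topology of $L$ enters, which is why no regularity is needed.
\item The product $\hat L\times\R$ is compared with this decomposable filling inside its own Weinstein neighborhood $T^*(\hat L\times\R)$. That neighborhood is subcritical, so the product is trivially flexible there. The comparison uses the $h$-principle of \cite[Theorem 4.2]{EliashbergGanatra}.
\item A translation in the cylindrical end turns the resulting isotopy into a compactly supported Hamiltonian isotopy.
\item Triviality of the complementary cobordism near $\partial_\infty(\hat L\times\R)$ lets decomposability with handles of index $<n$ persist in $\hat P\times\C$.
\end{enumerate}
Your instinct that a Morse function on $L$ should produce the handle decomposition is right. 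It has to be run in the jet neighborhood of one half of the double, not in the pages of the open book.
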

    
It is worth emphasizing that this result is more or less immediate when $L$ is regular. However, we make no assumptions on the regularity of $L$.

In the special case when $L$ is a Lagrangian disk, the Legendrian double is the standard Legendrian embedding. As we show in Section \ref{section_symp_ribbon}, it turns out that this Legendrian double is useful for giving an explicit description of the page in the open book decomposition obtained by stabilization. Recall that a Lagrangian embedding of a disk in the page can be used to produce a so-called stabilization of the open book, which is a new open book compatible with the same contact manifold, where the new page is symplectomorphic to the result of a Weinstein handle attachment along the boundary of the Lagrangian disk. Note that the stabilized page contains natural Legendrian sphere. In Theorem \ref{prop_stab_ob}, we give a precise relation between the page in the stabilization and the old page union a symplectic ribbon of the Legendrian double of the disk along which the stabilization is performed. 

We then relate Legendrian doubling to a different construction that produces a Legendrian contained in the page of a compatible open book. In previous joint work by the author and Arıkan \cite{AY}, the Legendrian suspension was constructed in the following manner. Again, we assume that we are given an exact Lagrangian filling $L \subset ({P},\eta)$ contained inside the page of a compatible open book on $(Y,\xi)$ with binding $B=\partial {P}$. Furthermore, we assume that the boundary of $L$ is contained inside a page $P_B$ of a compatible open book of the binding $B$ that arises from a symplectic Lefschetz fibration $\pi \colon \hat{P} \to \C$ of the completion $\hat{P}$ of the page ${P}$. We consider the completion $\hat{L} \subset \hat{P}$ of an exact Lagrangian filling $L \subset P$ which outside of a compact subset projects to a line $(R-\epsilon,+\infty)$ under $\pi$, and where the intersection of $\hat{L}$ with $\pi^{-1}(R)$ is the Legendrian boundary $\Lambda = \partial L$. Seidel's suspension construction produces a closed Lagrangian $L^\sigma \subset \hat{P}^\sigma$ contained inside the fiber $\hat{P}^\sigma=(\pi^\sigma)^{-1}(R)$ of the Lefschetz fibration $\pi^\sigma=\pi+z^2 \colon \hat{P} \times \C_z \to \C$ given as a stabilization of the trivial Lefschetz fibration $\operatorname{proj}_\C:\hat{P}\times \C\to \C$. The Weinstein manifold $\hat{P}^\sigma$ can be presented in the following different manners:
\begin{itemize}
    \item the double cover $\hat{P}^\sigma \to \hat{P}$ that is branched over the regular fiber $\hat{P}_B=\pi^{-1}(R)$;
   
    \item $\hat{P}^\sigma$ is obtained from $\hat{P}$ by attaching top Weinstein handles on all vanishing cycles induced by $\pi$, and extending the Lefschetz fibration with additional critical point on each added handle.
\end{itemize}

The Lagrangian $L^\sigma$ is by construction the pre-image of $L$ under the above branched cover. In other words, it is diffeomorphic to $L^\sigma=L \cup_{\partial L} L$, i.e.~the double of $L$. See Definition \ref{definition:Leg_suspension}.

Since $P^\sigma$ also can be considered as the page of the open book of $(Y,\xi)$ after stabilizing the page $P$ in the same manner as the above Lefschetz fibration was stabilized, it follows that $L^\sigma$ can be naturally considered as a Legendrian inside the page of this stabilized open book. This is the key feature of the stabilization construction: it produces Legendrians that are naturally embedded in the page of an open book. 

We finally relate the concepts of Legendrian doubles and suspension by proving the following, see Section \ref{suspension} for more.

\begin{theorem}\label{theorem:leg_isotopy}
The Legendrian suspension $L^\sigma\subset \partial_\infty (\hat{P}\times \C)$ of an exact Lagrangian filling $L\subset P$ whose completion $\hat{L} \subset \hat{P}$ projects onto $(R-\epsilon,+\infty)$ under the Lefschetz fibration $\pi \colon \hat{P} \to \C$ is Legendrian isotopic to the Legendrian double $L_{Leg}^d$ induced by the trivial open book on $\partial_\infty (\hat{P}\times \C)$ arising from the trivial Lefschetz fibration.
\end{theorem}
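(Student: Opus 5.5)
Both Legendrians are boundaries at infinity of exact Lagrangians in $\hat{P}\times\C$. The plan is to connect these two fillings by an explicit isotopy of exact Lagrangians that is cylindrical at infinity; restricting that isotopy to the ideal boundary $\partial_\infty(\hat{P}\times\C)$ then gives the desired Legendrian isotopy.

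By Theorem \ref{thm:flex_trivial_fib}, the double $L^d_{Leg}$ for the trivial open book is the ideal boundary of the product filling $\hat{L}\times\R\subset\hat{P}\times\C$. The suspension, in turn, is the ideal boundary of the Lagrangian thimble-type filling
\[
\mathcal{L}^\sigma=\{(p,z)\,:\,\pi(p)+z^2\in [R,\infty),\ p\in\hat{L}\}
\]
swept out by the fibrewise suspensions $L^\sigma_t=(\pi^\sigma)^{-1}(t)\cap(\hat{L}\times\C)$ for $t\geq R$. Concretely, for $p\in\hat L$ with $\pi(p)=s\in(R-\epsilon,\infty)$ real, the points of $\mathcal{L}^\sigma$ are those with $z^2=t-s$ real.

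The core step is to interpolate between the two fillings using the family of maps $\pi_\lambda=\lambda\pi+z^2$ for $\lambda\in[0,1]$, or more directly the family of submanifolds
\[
\mathcal{L}_\lambda=\{(p,z)\,:\,p\in\hat{L},\ \lambda\pi(p)+z^2\in\R\},
\]
restricted to the branch where $z\in\R$. When $\hat{L}$ projects into the real half-line, every $\mathcal{L}_\lambda$ equals $\hat{L}\times\R$ as a set. So the real content is the identification of ideal boundaries, not the Lagrangian family. At the ideal boundary, the suspension is viewed inside the page $\hat{P}^\sigma=(\pi^\sigma)^{-1}(R)$ of the open book of $\partial_\infty(\hat{P}\times\C)$ determined by $\pi^\sigma$. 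The double, by contrast, lies on two opposite pages of the open book determined by $\arg z$.

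The plan is therefore as follows.
\begin{enumerate}
\item Show that $\hat{L}\times\R$ is Hamiltonian isotopic, through Lagrangians cylindrical at infinity, to a Lagrangian whose ideal boundary lies in a single page $\{\pi^\sigma=R\cdot e^{i\theta}\}$ after the rescaling $z\mapsto\mu z$, $\mu\to\infty$. The subset $\{\pi(p)+z^2=R\}$ of $\hat{L}\times\R$ is exactly $L^\sigma$: the two sheets $z=\pm\sqrt{R-\pi(p)}$ are the two copies of $L$, glued along $\partial L=\pi^{-1}(R)\cap\hat L$.
\item Pass from $\pi^\sigma$ back to the trivial fibration through the family $\pi+\lambda z^2$ (or $\lambda\pi+z^2$), $\lambda\in(0,1]$. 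Since $\pi+\lambda z^2$ is a Lefschetz fibration for every $\lambda>0$, Giroux correspondence/uniqueness of the induced contact structure shows that all these open books support $\xi$ on $\partial_\infty(\hat P\times\C)$. As $\lambda\to 0$ the two sheets flatten onto the pages $\arg z=0,\pi$, which is exactly the doubling process of Section \ref{doubling process}.
\item Use Lemma \ref{lemma_def_ob}, the small perturbation making the double Legendrian, to show that the Legendrian condition persists along the family. Gray stability then converts the resulting isotopy of Legendrians for varying contact forms into a genuine Legendrian isotopy in $(Y,\xi)$.
\end{enumerate}

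The main obstacle is step 2 near the binding. As $\lambda\to0$, the region where the sheets meet, i.e.\ $\partial L$ in $\pi^{-1}(R)$, degenerates into the binding $\pi^{-1}(R)\times\{0\}$ of the trivial open book. One must check that the Lagrangians $\{\pi+\lambda z^2=R\}\cap(\hat L\times\R)$ have Legendrian ideal boundaries for a uniform choice of contact form near infinity. I would handle this in the model neighbourhood $B\times D^2$ of the binding. There $\hat L$ is cylindrical, $\Lambda\times(R-\epsilon,\infty)$, and the family is explicitly a parabola $s+\lambda x^2=R$ in the $(s,x)$ coordinates, so the Legendrian condition can be verified by a direct computation with the standard form $\alpha_B+r^2d\theta$. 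If the cylindrical-end hypothesis fails to make $\lambda\pi$ compatible, I would instead invoke the uniqueness part of Theorem \ref{theorem:flexible_filling}. The plan would be to show that $L^\sigma$ also bounds $\hat L\times\R$, via step 1, as a regular flexible filling, and to check that the two formal Legendrian classes agree. That would give the isotopy from the $h$-principle.
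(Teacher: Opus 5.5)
Your overall route matches the paper's: rescale \(\pi\) inside \(\pi^\sigma\), follow the lift of \(L\), land on \(\partial_\infty(\hat L\times\R)\), and conclude with Theorem~\ref{thm:flex_trivial_fib}. The gap is in the step that carries the content. Your Step 1 identity \(L^\sigma=\{\pi+z^2=R\}\cap(\hat L\times\R)\) holds only if \(\pi(L)\subset(-\infty,R]\). The hypothesis, as made precise in Definition~\ref{definition:Leg_suspension}, constrains only the cylindrical end of \(\hat L\). Read literally for all of \(\hat L\), it would make \(\hat L\) a parallel-transported cylinder \(\Lambda\times(R-\epsilon,+\infty)\), which is not a filling. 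Over the compact part of \(L\), the values \(\pi(p)\) are in general non-real. The lifts \(z=\pm\sqrt{R-\pi(p)}\) then leave \(\hat L\times\R\), so your remark that ``every \(\mathcal L_\lambda\) equals \(\hat L\times\R\) as a set'' does not apply, and neither does the reduction to comparing ideal boundaries.

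The paper handles exactly this with a localized rescaling \(\pi^\sigma_t=(t\rho(\pi)+1-t)\pi+z^2\). The cut-off \(\rho\) equals \(1\) near and beyond the branch locus \(\pi^{-1}(R)\), where \(\hat L\) already lies over the real ray, and equals \(0\) over the compact part of \(L\). Nothing changes near the region where the two sheets of \(L^\sigma\) are glued, so the lift of \(L\) can be tracked without degeneration. At \(t=1\) one has \(\pi_1\equiv 0\) over the compact part, so the lift there is literally \(L\times\{\pm\sqrt R\}\subset\hat L\times\R\). This is what makes the identification with \(\partial_\infty(\hat L\times\R)\), and hence with \(L^d_{Leg}\), possible.

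Your Step 2 does not replace this. As \(\lambda\to 0\), \(\pi+\lambda z^2\) tends to \(\pi\circ\mathrm{pr}_1\), not to \(\operatorname{proj}_\C\). The other family, \(\lambda\pi+z^2\), tends to \(z^2\), which is not a Lefschetz fibration. Its fibre is two copies of \(\hat P\), and its ``pages'' are the unions \(P_\theta\cup P_{\theta+\pi}\) of opposite pages of the trivial open book. Meanwhile the branch locus \(\pi^{-1}(R/\lambda)\) runs off to infinity, so for \(\lambda>0\) you are looking at suspensions of ever longer truncations of \(\hat L\). Being Lefschetz for each \(\lambda>0\), together with Gray stability, therefore says nothing about the endpoint. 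The convergence near the binding, which you flag as the main obstacle and defer to ``a direct computation'', is precisely the missing identification. Your global rescaling is what creates this degeneration, and the paper's cut-off is designed to avoid it.

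The fallback is circular. It presupposes \(L^\sigma=\partial_\infty(\hat L\times\R)\), and once that is known Theorem~\ref{thm:flex_trivial_fib} already gives \(L^\sigma=L^d_{Leg}\) with no \(h\)-principle. Moreover, the uniqueness clause of Theorem~\ref{theorem:flexible_filling} is about doubles, so it cannot be applied to \(L^\sigma\) before you know \(L^\sigma\) is one.

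Your \(\mathcal L^\sigma\) is also not a Lagrangian filling of \(L^\sigma\). Over \(\pi(p)>R\) it contains the imaginary branch \(z\in i\R\) as well, which crosses the real one, so it is not even a submanifold.
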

\section{Preliminaries} \label{prelim}

\subsection{Basics of contact and symplectic topology}
We begin with a notion of a contact manifold. A \textit{contact manifold} $(Y,\xi)$ is a $2n+1$-dimensional smooth manifold equipped with a maximally non-integrable hyperplane distribution  $\xi$ of $TY$. The distribution $\xi$ is called a \textit{contact structure} on $Y$ and locally is given by $\xi$ that satisfies $\alpha\wedge (d\alpha)^{n} \neq 0$ for a  one-form $\alpha$ on $Y$ that defines $\xi$ as its kernel.  Such a differential one-form $\alpha$ is called a \textit{contact form} on $Y$. A submanifold $\Lambda$ of the contact manifold $ (Y,\xi)$ is called \textit{isotropic} if $\Lambda$ is everywhere tangent to $\xi$ and is called \textit{Legendrian} if it is of maximal dimension, i.e $n$.  

Given a contact form $\alpha$ defining a contact structure \(\xi\) on a contact manifold \(Y\), the associated \emph{Reeb vector field} \(\mathcal{R}_\alpha\) is the unique vector field on \(Y\) satisfying $$
\alpha(\mathcal{R}_\alpha)=1,
\qquad
\iota_{\mathcal{R}_\alpha}d\alpha=0.
$$
Equivalently, $$
d\alpha(\mathcal{R}_\alpha,\cdot)=0.
$$
Building upon this foundation, we now introduce related concepts essential to the discussion of exact Lagrangian submanifolds.

An \textit{exact symplectic manifold} $(X,\omega)$ is an even dimensional smooth manifold together with a closed,  non-degenerate and exact two-form $\omega=d\eta$ with a primitive one-form $\eta$. A half dimensional submanifold $L$ of the exact symplectic manifold $ X$  is said to be \textit{Lagrangian} if the restriction of the symplectic two-form to the submanifold $L$ vanishes, and \emph{exact} if the pull-back of $\eta$ to $L$ moreover is exact.  We call a Lagrangian submanifold \emph{strongly exact} if the pull-back of the primitive to $L$ vanishes, i.e. $\eta|_{TL}=0.$

Further extending these ideas, the concept of a symplectization of a contact manifold becomes relevant, particularly when considering Lagrangian cobordisms. 

The \textit{symplectization} of the contact manifold $(Y, \xi)$ is the exact symplectic manifold $(\mathbb{R}\times Y, d(e^{t}\alpha))$ where $t$ is the $\mathbb{R}$-coordinate and $\alpha$ is a contact form.

Given two Legendrian submanifolds $\Lambda_-$ and $\Lambda_+$ of $(Y,\xi)$, an \textit{exact Lagrangian cobordism from} $\Lambda_-$ to $\Lambda_+$ is a properly embedded exact Lagrangian submanifold $\hat{L}$ of the symplectization $(\mathbb{R}\times Y,d(e^{t}\alpha))$ in the following form:
$$ \hat{L}=(-\infty,-T]\times \Lambda_-\cup {L}\cup [T,\infty)\times \Lambda_+$$
for some positive number $T$ where ${L}$ is a subset of $[-T,T]\times Y$ with boundary $\partial {L}=\Lambda_-\sqcup \Lambda_+$. If the negative boundary $\Lambda_-$ is empty, then $\hat{L}$ is called an \textit{exact Lagrangian filling} of $\Lambda_+$.

At this stage, exact Lagrangian fillings provide a natural bridge between Legendrian geometry and symplectic topology. In particular, the ambient exact symplectic manifolds in which such fillings live often arise from a special class of manifolds known as Liouville domains. These domains serve  as the foundational building blocks for many constructions in symplectic and contact topology, including symplectic fillings, open books and Lefschetz fibrations. To proceed, we first review the structure of Liouville domains and their completions.

 A \textit{Liouville domain} is a compact exact symplectic manifold $({P}, \eta)$ with boundary equipped with globally defined Liouville vector field $\chi$, i.e. $\mathcal{L}_\chi \omega=\omega$ for the symplectic two-form $\omega=d\eta$, which points transversally out of the boundary. This implies that the boundary has an induced contact form $\left(\partial{P},\alpha_\eta=\eta|_{T(\partial{P})}\right).$

 A key concept for a Liouville domain $({P}, \eta, \chi)$ is the skeleton of the Liouville flow, which can be formally expressed as 

 $$ \operatorname{Skel}({P}, \eta, \chi)=\bigcap\limits_{t>0} \phi^{-t}({P})$$

\noindent where $\phi^t \colon ({P},e^t\eta) \to ({P},\eta)$ the conformal symplectomorphism induced by the time-$t$ flow of the Liouville vector field, the so-called \textit{Liouville flow}. The skeleton is the attractor of the negative flow generated by $\chi$ which implies that a point in the domain belongs to the skeleton if and only if it does not escape the domain under this flow. Note that ${P}$  deformation retracts onto its skeleton by the flow.

 A Liouville domain ${P}$ can be extended to a Liouville manifold by attaching the symplectization of the boundary $\partial {P}$ to the boundary of ${P}$. The resulting manifold 

 $$ \hat{P}:= {P} \cup_{\partial {P}} \left((0,\infty)\times \partial {P}\right)$$

 \noindent is called the \textit{symplectic completion} of the Liouville domain ${P}$. By a Liouville manifold we will mean an exact symplectic manifold that has been obtained by the completion of a Liouville domain in this manner.

\subsection{Open books and Lefschetz fibrations}
 Symplectic Lefschetz fibration, when it exists, is a very important tool for describing and understanding a Liouville manifold. A Lefschetz fibration on a Liouville domain induces a compatible open book decomposition of its contact boundary. Contact manifolds in general carry compatible open book decompositions, as proven by Giroux \cite{Giroux2002}. They also provide crucial tools for investigating the contact manifold.  The connection between these two phenomena is fundamental in contact topology and symplectic geometry.

 An \textit{abstract contact open book} $({P},\eta,\phi)$  consists of  a $2n$-dimensional Liouville domain $({P},\eta)$   and a symplectomorphism
$$\phi \colon ({P},\eta) \to ({P},\eta)$$
  that is the identity near $\partial {P}$ and  satisfies $\phi^*\eta=\eta+dG$ for some $G \colon {P} \to \R$ which vanishes near the boundary.
  
 An abstract contact open book gives rise to a closed contact manifold whose contact structure is supported by the open book, a notion introduced by Giroux, who also emphasized its importance in \cite{Giroux2002}. By construction, this contact manifold is supported by the same open book in sense of the following definition.
 
 \begin{definition}
    A contact manifold \((Y,\xi)\) equipped with a contact form $\alpha$ is said to be \emph{supported} by an open book with page \((P,\eta)\) and monodromy \(\phi \in \mathrm{Symp}^c(P\setminus \partial P)\) if there is a smooth identification of \(Y\) and the corresponding open book such that the contact form satisfies the following properties:
\begin{itemize}
\item Near the binding: There is a neighborhood $D^2_{3\epsilon} \times \partial P$ of the binding, the contact form $\alpha$ is of the form
\(        e^{f(r)}\alpha_\eta + g(r)\,d\theta,
   \)
   using polar coordinates, where moreover, \begin{itemize}
    \item $f(r)=-\frac{r^2}{2}$ and $g(r)=\frac{r^2}{2}$ inside  $r \le \epsilon$;
    \item $f'(r)\le 0$ and $g'(r) \ge 0$ for all $r$; and
    \item $g(r)=1$ and $f(r)=-r$ for $r \ge 2\epsilon$.
\end{itemize}
Moreover, using the symplectization coordinates near $\partial P$, for each fixed $\theta$,
$$ (r,\theta,p) \mapsto (e^{f(r)},p,\theta) \in (-\infty,0) \times \partial P \times S^1 \hookrightarrow (P \setminus \partial P) \times S^1$$
gives the trivial fibration near the binding induced by the open book.
\item On the mapping cylinder: The contact form on the mapping cylinder $((P \setminus \partial P) \times [0,2\pi])/\sim$ given by the complement of the binding is induced by the contact form 
\[
    \eta + dG+g\,d\theta,
\]
where $G,g\colon (P \setminus \partial P) \times [0,2\pi]\to \R$ are smooth functions, $G$ is compactly supported, and $g$ is positive.
\end{itemize}
\end{definition}

 \begin{lemma}\label{lemma_contact_form}
     Any one-form satisfying the properties in the above definition is automatically a contact form. Moreover, the Reeb vector field $\mathcal{R}_\alpha$ is transverse to the pages away from the binding while it is tangent to the binding, and moreover given by a positive multiple of 
     $${\mathcal{R}}_{{\alpha}_{\eta}}-\frac{e^{f(r)}f'(r)}{g'} \partial_\theta$$
     in the neighborhood $D^2_{3\epsilon} \times \partial P$ of the binding.
 \end{lemma}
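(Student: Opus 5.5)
The plan is a direct computation in the two regions of the definition: the neighborhood $D^2_{3\epsilon}\times\partial P$ of the binding, where $\alpha=e^{f(r)}\alpha_\eta+g(r)\,d\theta$, and the mapping cylinder, where $\alpha=\eta+dG+g\,d\theta$. In each region I will verify $\alpha\wedge(d\alpha)^n\neq 0$ and then solve the Reeb equations $\alpha(\mathcal{R})=1$, $\iota_{\mathcal{R}}d\alpha=0$.

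\emph{Near the binding.} Write $\beta=\alpha_\eta$, which is a contact form on the $(2n-1)$-dimensional manifold $\partial P$, so $\beta\wedge(d\beta)^{n-1}\neq 0$. Then
$$d\alpha=e^{f}f'\,dr\wedge\beta+e^{f}d\beta+g'\,dr\wedge d\theta .$$
Expanding $(d\alpha)^n$, the terms containing $dr$ at most once survive. We get
$$(d\alpha)^n = n e^{nf}(d\beta)^{n-1}\wedge\bigl(f'\,dr\wedge\beta+e^{-f}g'\,dr\wedge d\theta\bigr)$$
up to harmless reorderings. Wedging with $\alpha$ gives a multiple of $dr\wedge d\theta\wedge\beta\wedge(d\beta)^{n-1}$ with coefficient proportional to
$$e^{nf}\bigl(g'-gf'\bigr).$$
By the sign conditions $f'\le 0$ and $g'\ge 0$, with $g>0$ for $r>0$, this is nonnegative. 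It vanishes only if $f'=g'=0$. That case occurs only for $r\ge 2\epsilon$, where $f'=-1$, so it never happens. At $r=0$ one uses $f=-r^2/2$, $g=r^2/2$: the expression behaves like $r\cdot(1+\dots)$, which is exactly the vanishing of $r\,dr\wedge d\theta$, the area form in polar coordinates. So in Cartesian coordinates the form is nondegenerate. This last point is where care is needed, since polar coordinates degenerate on the binding.

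For the Reeb field, make the ansatz $\mathcal{R}=a\mathcal{R}_\beta+b\partial_\theta$. Then
$$\iota_{\mathcal{R}}d\alpha = -a e^{f}f'\,dr - b g'\,dr,$$
because $\iota_{\mathcal{R}_\beta}d\beta=0$ and $\iota_{\mathcal{R}_\beta}\beta=1$. Vanishing forces $b=-ae^{f}f'/g'$, which gives the stated direction $\mathcal{R}_{\alpha_\eta}-\frac{e^{f}f'}{g'}\partial_\theta$. Normalization gives $a\bigl(e^{f}-gf'e^{f}/g'\bigr)=1$, so $a=g'/\bigl(e^{f}(g'-gf')\bigr)>0$. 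Hence the field is a positive multiple of the stated one.

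At $r=0$, L'Hôpital with $f'=-r$, $g'=r$ gives $-e^{f}f'/g'\to 1$, so the expression extends smoothly. The $\partial_\theta$ component there is interpreted as the rotation field $x\partial_y-y\partial_x$, which vanishes on $r=0$. So on the binding $\mathcal{R}$ is a multiple of $\mathcal{R}_\beta$, hence tangent to the binding. Away from $r=0$, the $\partial_\theta$-coefficient $-ae^{f}f'/g'\cdot g'$-type quantity, namely $d\theta(\mathcal{R})=b$, is positive whenever $f'<0$. Where $f'=0$ instead, we need $g'>0$, and I will argue directly that $d\theta(\mathcal{R})\neq 0$ there, since otherwise $\mathcal{R}$ would lie in the kernel computation forcing $a e^{f}f'=0$. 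Transversality to the pages $\{\theta=\text{const}\}$ is exactly $d\theta(\mathcal{R})\neq 0$.

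\emph{On the mapping cylinder.} Set $\lambda_\theta=\eta+d_PG$, where $d_P$ is the differential in the $P$-directions. Then $\alpha=\lambda_\theta+(g+\partial_\theta G)\,d\theta$, and after shrinking $G$ we may assume $g+\partial_\theta G>0$, or we simply absorb $\partial_\theta G$ into $g$. Then
$$d\alpha = d\eta + d\theta\wedge(\cdots)$$
with the $P$-part $d\eta=\omega$ symplectic on each page. A standard computation gives
$$\alpha\wedge(d\alpha)^n = \bigl(g\,\omega^n + \text{terms involving } d_Pg\wedge\lambda\wedge\omega^{n-1}\bigr)\wedge d\theta .$$
Here is the main obstacle. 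Positivity requires $g$ to dominate the term $\chi$-derivative, i.e. $g-\chi(g)>0$ where $\chi$ is the Liouville field of $\lambda_\theta$. I will handle it by assuming, as is implicit in Giroux's construction, that $g$ is a large constant plus a compactly supported correction, or by rescaling $g\mapsto Kg$, and check that the definition's $g$ matches this near $\partial P$, where it equals $1$ in the binding chart. Given the contact condition, $\omega$ restricted to the pages is nondegenerate. So $\ker d\alpha$ is transverse to $TP$ and has nonzero $d\theta$-component, which gives transversality of $\mathcal{R}$ to the pages. Finally, the two regions are glued via the stated fibration chart, and the formulas agree on the overlap $r\ge 2\epsilon$, where $f=-r$ and $g=1$.
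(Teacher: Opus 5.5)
Your algebra near the binding is correct: $\alpha\wedge(d\alpha)^n=n e^{nf}(g'-gf')\,\alpha_\eta\wedge(d\alpha_\eta)^{n-1}\wedge dr\wedge d\theta$, your treatment of $r=0$ is fine, and so are the Reeb equations $ae^{f}f'+bg'=0$, $ae^{f}+bg=1$. The gap is in what you conclude from them, because you use only $f'\le 0\le g'$, and that is not enough.

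\textbf{Contact condition near the binding.} Your claim that $f'=g'=0$ ``occurs only for $r\ge 2\epsilon$'' is unfounded. Nothing in the listed conditions prevents $f$ and $g$ from having a common flat point in $(\epsilon,2\epsilon)$, and there $g'-gf'=0$.

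\textbf{Transversality.} The step you plan here cannot work. At a point with $f'(r_0)=0<g'(r_0)$ the Reeb equations force $b=0$. Then $\mathcal{R}=e^{-f}\mathcal{R}_{\alpha_\eta}$, which is tangent to the page $\{\theta=\mathrm{const}\}$.

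\textbf{The missing ingredient.} It is exactly what the paper's short proof invokes: the pull-back of $\alpha$ to each page is a Liouville form. Near the binding that pull-back is $e^{f(r)}\alpha_\eta$, and $\bigl(d(e^{f}\alpha_\eta)\bigr)^n=n e^{nf}f'\,dr\wedge\alpha_\eta\wedge(d\alpha_\eta)^{n-1}$. So the Liouville condition is precisely $f'<0$ for $r>0$, equivalently that $r\mapsto f(r)$ is a genuine collar coordinate. With it, $g'-gf'\ge -gf'>0$ for $r>0$. Transversality then follows everywhere from the argument you already use on the mapping cylinder: a nonzero vector of $\ker d\alpha$ tangent to a page would lie in the kernel of the symplectic form $d\alpha|_{TP_\theta}$.

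\textbf{The Reeb formula.} Dividing by $g'$ is illegitimate where $g'=0$, which includes all of $r\ge 2\epsilon$. There $a=0$, not $a>0$, and $\mathcal{R}=\partial_\theta$. The formula valid everywhere is $\mathcal{R}=\frac{g'\mathcal{R}_{\alpha_\eta}-e^{f}f'\partial_\theta}{e^{f}(g'-gf')}$. So the lemma's expression must be read as a positive multiple of $g'\mathcal{R}_{\alpha_\eta}-e^{f}f'\partial_\theta$.

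\textbf{The mapping cylinder.} You are right that positivity of $g$ alone does not give the contact condition. The coefficient of $\omega^n\wedge d\theta$ in $\alpha\wedge(d\alpha)^n$ is $g+\partial_\theta G-\chi_\theta(g)$, with $\chi_\theta$ the Liouville field of $\eta+d_PG$. It becomes negative once $g$ increases steeply enough along $\chi_\theta$, so the first sentence of the lemma genuinely needs an extra hypothesis there. Your remedies do not supply it:
\begin{itemize}
\item rescaling $g\mapsto Kg$ does not change the sign of $g-\chi_\theta(g)$;
\item a large constant $g$ is incompatible with $g=1$ on the overlap with the binding chart;
\item shrinking $G$ changes the form under consideration.
\end{itemize}
The honest fix is to add positivity of this coefficient to the hypotheses. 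For example, take $g$ constant along the pages and $G$ independent of $\theta$, as in Example~\ref{ex:trivial_ob} with $g\equiv 1$, $G=0$. This is what the paper tacitly assumes when it calls the contact condition easy to verify.
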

 \begin{proof}
 Since the pull-back of the contact form to each page is a Liouville form, contact condition is easy to verify. The computation of the Reeb vector field can be verified.
 \end{proof}

\begin{example}\label{ex:trivial_ob}
Let $(P,\eta)$ be an exact symplectic manifold with boundary, where
$\eta$ is a Liouville form and the associated Liouville vector field
points outward along $\partial P$. Consider the identity monodromy
\(
    \phi=\operatorname{id}_P.
\)
The corresponding open book
\(
    (Y,\xi)=\operatorname{OB}(P,\eta,\operatorname{id}_P)
\)
can be constructed in the following way. Since the monodromy is the identity, the mapping torus is
\[
    M_{\operatorname{id}}
    =
    \frac{P\times[0,2\pi]}{(x,2\pi)\sim(x,0)}
    \cong P\times S^1.
\]
Thus, away from the binding, the open book is given by
\(
    (P\setminus \partial P) \times S^1.
\)
The boundary of the mapping torus is $\partial P\times S^1$, and
the closed manifold $Y$ is obtained by gluing in
$\partial P\times D^2$:
\[
    Y
    =
    (P\times S^1)
    \cup_{\partial P\times S^1}
    (\partial P\times D^2),
\]
where the pages are given by $P_\theta=P\times\{\theta\}$, while the binding is
\(
    B=\partial P\times\{0\}.
\)

Near the binding, the contact form can be chosen in the standard
form as in the definition
\[
    e^{f(r)}\alpha_\eta+g(r)\,d\theta,
\]
where $\alpha_\eta=\eta|_{\partial P}$ and $r$ is the radial coordinate
in the $D^2$-factor. 

Away from the binding, we can choose the contact form to be given by simply $\eta+d\theta,$ i.e. $G=0$. In other words, each page can be identified with $(P,\eta)$ for a fixed choice of Liouville form.

If we want to change the Liouville form on the pages to $\eta+dG$ for some compactly supported $G \colon P \to \R$, then it suffices to deform each page above by shifting it in the $\theta$-direction using the function $G$.
\end{example}

\begin{example}
As a concrete example, take
\(
    P=D^{2n}
\)
with its standard Liouville form
\(
    \eta
    =
    \frac{1}{2}
    \sum_{j=1}^n
    (x_j\,dy_j-y_j\,dx_j).
\)
Then
\[
    \operatorname{OB}(D^{2n},\eta,\operatorname{id})
    \cong S^{2n+1},
\]
and the resulting contact structure is the standard contact structure
on $S^{2n+1}$. Hence, the standard contact sphere provides the
simplest example of a contact manifold supported by a trivial open
book.
\end{example}
There are different possible choices and conventions. We will be using the following construction.

The contact manifold $\mathrm{OB}({P},\eta,\phi)$ is obtained in the following manner. The page $({P},\eta)$ has a collar on which there is a Liouville form preserving identification with the half symplectization
$$ ((-\infty,0] \times \partial{{P}},e^t\alpha_\eta),$$
where $\alpha_\eta=\eta|_{T\partial{{P}}}$ is the induced contact form on the boundary. We use
$$ {P}_{<a} \subset {P}$$
to denote the open sub-domain ${P} \setminus ([a,0] \times \partial{{P}})$.
\begin{itemize}
\item \emph{The neighborhood of the binding} is the contact manifold
$$(\partial {P}\times D_{3\epsilon}^2,e^{f(r)}\alpha_\eta+g(r)d\theta)$$
where $r$ and $\theta$ are polar coordinates on $D_{3\epsilon}^2$. Here the smooth non-negative functions $f$ and $g$ satisfy the following properties:
\begin{itemize}
    \item $f(r)=-\frac{r^2}{2}$ and $g(r)=\frac{r^2}{2}$ inside  $r \le \epsilon$;
    \item $f'(r)\le 0$ and $g'(r) \ge 0$ for all $r$; and
    \item $g(r)=1$ and $f(r)=-r$ for $r \ge 2\epsilon$.
\end{itemize}
\item In the case of non-trivial monodromy, \emph{the union of the pages} is obtained by forming the mapping cylinder of the monodromy and interpolating between pages
$$({P}_{<-2\epsilon} \times S^{1}_\theta,\eta+d\theta).$$ 
\item The contact manifold $\mathrm{OB}({P},\eta,\phi)$ defined by the abstract open book is finally obtained  by gluing the two contact manifolds above via the identification of the neighborhood of the binding
$$(y,(r,\theta))\in \partial {P}\times D_{3\epsilon}^2$$
for $r \ge 2\epsilon$ and
$$(f(r),y,\theta) \in [-3\epsilon,-2\epsilon) \times \partial{{P}}\times S^1_\theta \hookrightarrow ({P}_{<-2\epsilon}\setminus {P}_{<-3\epsilon})\times S^1_\theta.$$
\end{itemize}

We show that there exist methods to deform the pages of an open book decomposition, producing a new open book decomposition with modified pages and monodromy, while preserving the same contact manifold. Recall that by Gray's stability theorem, two isotopic contact structures are contactomorphic, \cite{geiges}.

\begin{lemma}\label{lemma_def_ob}
Let $(Y,\xi=\ker \alpha)$ be a contact manifold supported by an open book $\mathrm{OB}({P},\eta,\phi)$, and $H \colon P \setminus \partial P \to \R$ is an arbitrary compactly supported function. Then we can isotope the contact structure and provide a new contact form, while fixing a neighborhood of the binding, to get a contact structure $(Y,\xi'=\ker \alpha')$ supported by the same open book, where the mapping cylinder can be identified with 
$$ \big((P \setminus \partial P) \times [\theta_0,\theta_0+2\pi]/\sim, \eta+dH+dG+gd\theta\big),$$
where $G, g\colon (P \setminus \partial P) \times [\theta_0,\theta_0+2\pi]\to \R $  and $G$ vanishes  and $g\geq0$ equals to 1 except possibly in an arbitrarily small neighborhood of $(P \setminus \partial P) \times \{\theta_0\}$. In other words, most of the pages are identified with $(P,\eta+dH)$. 
\end{lemma}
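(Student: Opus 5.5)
The plan is to write the new form explicitly and connect it to $\alpha$ by a linear path of contact forms; Gray's stability theorem then gives the isotopy of contact structures. Work on the mapping cylinder, where $\alpha$ is induced by $\eta+dG_0+g_0\,d\theta$ with $G_0$ compactly supported and $g_0>0$. By the construction of $\mathrm{OB}(P,\eta,\phi)$ we may assume that $G_0=0$ and $g_0=1$ away from a small neighborhood of $\theta_0$, where $\phi$ is absorbed. Fix a cutoff $\rho\colon[\theta_0,\theta_0+2\pi]\to[0,1]$ that equals $1$ outside a $\delta$-neighborhood of the endpoints and vanishes near the endpoints. Then set
\[
\alpha_s=\eta+dG_0+s\,d\big(\rho(\theta)H\big)+g_0\,d\theta, \qquad s\in[0,1].
\]
This form is unchanged near $\theta_0\sim\theta_0+2\pi$, so it descends to the mapping cylinder and agrees with $\alpha$ there; in particular it is compatible with the gluing by $\phi$. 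Since $H$ is compactly supported in $P\setminus\partial P$, $\alpha_s$ also agrees with $\alpha$ near the binding, and the neighborhood $D^2_{3\epsilon}\times\partial P$ is fixed. Expanding, we get $\alpha_s=(\eta+s\rho\,dH+dG_0)+(g_0+sH\rho'(\theta))\,d\theta$. This has the required form $\eta+dH+dG+g\,d\theta$ with $G=(s\rho-1)H+G_0$ and $g=g_0+sH\rho'$. At $s=1$, $G$ vanishes and $g=1$ outside the $\delta$-neighborhood of $\theta_0$, which is what the statement asks for.

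The main step is to show that every $\alpha_s$ is contact. The pullback of $\alpha_s$ to each page $P_\theta$ is $\eta+d(G_0+s\rho H)$, which is a Liouville form with symplectic form $d\eta$. As in Lemma~\ref{lemma_contact_form}, compute
\[
\alpha_s\wedge(d\alpha_s)^n = \big(g-\partial_\theta(\text{page primitive}) \text{-correction}\big)\, d\theta\wedge (d\eta)^n + (\text{terms from } d\theta\text{-components}),
\]
which reduces to the condition that $g-\lambda(X_\theta)$ is positive. Here $\lambda$ is the page primitive, and the correction term comes from $\partial_\theta$ of the page primitive paired against the Liouville vector field. More concretely, write $\alpha_s=\lambda_\theta+g\,d\theta$ with $\lambda_\theta$ a $\theta$-dependent Liouville form. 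Then $\alpha_s\wedge (d\alpha_s)^n=(g\,(d\lambda_\theta)^n+n\,\lambda_\theta\wedge(dg-\dot\lambda_\theta)\wedge(d\lambda_\theta)^{n-1})\wedge d\theta$. The error terms involve $\rho'$ multiplied by $H$ and $dH$, and they are large when $\delta$ is small.

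This is the obstacle: the interpolation region is short in $\theta$, so $\rho'$ is of size about $1/\delta$. I would handle it by adding a large constant to the $d\theta$-coefficient, i.e.\ using $(g_0+C\chi(\theta))\,d\theta$ with $C\gg\sup|H|/\delta$ and a bump $\chi$ supported near $\theta_0$. This is allowed, since $g$ is only required to be positive (not equal to $1$) in that neighborhood. Alternatively, one can exploit that the page condition only needs $g\,(d\lambda)^n$ to dominate. Once $g$ dominates, the top form is positive. The same control holds for all $s$, and the added bump can be introduced by a preliminary linear homotopy of $g$, which preserves the contact condition because increasing $g$ only helps. Gray's theorem then yields the isotopy $\xi=\ker\alpha_0\simeq\ker\alpha_1=\xi'$, fixed near the binding. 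Finally, the open book is unchanged because pages remain the level sets of $\theta$, the binding neighborhood is untouched, and the Reeb field stays transverse to the pages.
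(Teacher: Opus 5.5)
Your argument follows the same route as the paper's proof: the linear family $\eta+s\rho\,dH+dG+g\,d\theta$ with $\rho$ a cutoff in $\theta$, followed by Gray stability, which fixes the binding neighborhood because the forms agree there. It is in fact more careful than the paper: the paper simply cites Lemma~\ref{lemma_contact_form} to assert that the family is contact, whereas, as you observe, on the transition region of $\rho$ the condition reads $1+C\chi+s\rho'(\theta)H>0$, so enlarging the $d\theta$-coefficient there is genuinely needed when that region is short.

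One detail to add is that the bump $C\chi(\theta)$ must be multiplied by a page cutoff $\beta$. It should equal $1$ on a compact set containing the supports of $H$, $G_0$ and $\phi$, be nonincreasing along the Liouville collar, and vanish near $\partial P$, so that the form is untouched near, and stays smooth at, the binding. This is harmless: it changes the contact condition by $C\chi(\beta-X\beta)\ge 0$, where $X$ is the Liouville field of $\eta$.
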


\begin{proof}
    Consider $1$-parameter family of one-forms
$$\eta+\rho s\,dH+dG+gd\theta$$
    where $\rho$ is a bump function and $s\in [0,1]$. By Lemma \ref{lemma_contact_form}, this is a $1$-parameter family of contact forms. In addition, open book remains compatible with all the contact forms in the $1$-parameter family.
\end{proof}
Having established our conventions for open books, we now proceed to the notion of a \emph{symplectic Lefschetz fibration}. For the suspension construction, symplectic Lefschetz fibration plays a crucial role.
       
       Let $P$ be a compact $2n$-dimensional manifold with corners equipped with an exact symplectic form $\omega=d\eta$ such that the both faces of the  boundary $\partial {P}=\partial_v {P}\cup \partial_h {P}$ are convex.
A \textit{compatible Lefschetz fibration} on a compact $2n$-dimensional manifold with corners $P$ is a holomorphic map $ \pi: {P}^{2n}\rightarrow D^2$ for some choice of compatible complex structure on ${P}$, which satisfies the following conditions:

\begin{enumerate}
\item The map $\pi$ has finitely many non-degenerate distinct critical values $s_1,...,s_k \in int(D^2)$, and there exists an unique critical point $r_j \in  \pi^{-1}(s_j)$ for each $j=1,...,k$,
and the compatible complex structure on ${P}$ is integrable near the critical points,

\item Near each critical point $r_j$ and the corresponding critical value $s_j$, there are local complex coordinate charts matching with the orientations of ${P}$ and  $D^2$ such that the map $\pi$ locally has the form $\pi(z_1,...,z_n)=z_1^2+...+z_n^2$,

\item The restriction of $\pi$ to ${P}\backslash \pi^{-1}(\{s_1,...,s_k\})$ is a locally trivial fibration  over $D^2\setminus \{s_1,...,s_k\}$ whose fibers are $2n-2$-dimensional exact symplectic manifolds with convex boundary for the restricted forms,

\item The restriction $\pi|_{\partial_v {P}}: \partial_v {P}\rightarrow \partial D^2$ to the vertical boundary  is a surjective smooth fiber bundle. Moreover, there is a neighborhood of the horizontal boundary $\partial_h {P}=\bigcup\limits_{z\in D^2} \partial(\pi^{-1}(z))$ such that the restriction of $\pi$ to this neighborhood is a product fibration $D^2\times \mathcal{N}(\partial F)$ where $F$ is a regular fiber of $\pi$ and $ \mathcal{N}(\partial F)$ is a neighborhood of $\partial F$. The restricted Liouville form on this product decomposes as a sum of forms from the two factors, ensuring compatibility with the fiberwise structure. It follows that, $\pi|_{\partial_h {P}}: \partial_h {P}\rightarrow D^2$ is a surjective fiber bundles.
\end{enumerate}

The corners of ${P}$ can be rounded to get a Liouville domain whose completion has an exact symplectic Lefschetz fibration which is obtained by extending the compatible Lefschetz fibration.

The open book decomposition on the boundary of a Liouville manifold with corners induced by a compatible Lefschetz fibration on the manifold becomes apparent in the following observation.

  It is a well known fact that  a compatible Lefschetz fibration on a Liouville domain with corners, as defined above, induces an open book decomposition on its boundary. The map $$\pi|_{\partial_v P} : \partial_v P \to \partial D^2,$$
appearing in condition (4) of the definition of a compatible Lefschetz fibration, is obtained by restricting $\pi$ after choosing an orientation-preserving identification of $\partial D^2$ with $S^1$. Since there are no critical values on $\partial D^2$ and since the fibration is trivial near the horizontal boundary, the vertical boundary $\partial_v P$—that is, the union of regular fibers over $\partial D^2$—is a smooth fiber bundle over $S^1$, which provides the pages of the induced open book decomposition.

Similarly, the condition on the horizontal boundary $\partial_h P$ describes a neighborhood of the binding. The binding of the induced open book can be identified with the boundary of a central regular fiber, which is common to all pages.

\begin{example}\label{ex:induced_ob}
    Consider the trivial Lefschetz fibration $$
\operatorname{proj}_\C \colon \hat{P} \times \mathbb{C} \to \mathbb{C},
$$
on the Liouville manifold $\bigl(\hat{P} \times \mathbb{C},\, \lambda\bigr).$ Since the Liouville vector field associated to $\lambda$ is complete, the ideal boundary $\partial_\infty(\hat{P} \times \mathbb{C})$ carries a natural contact structure given by the restriction of $\lambda$. Choosing an exhausting function $g \colon \hat{P} \to \mathbb{R}_{\ge 0}$, the boundary at infinity may be identified with the level set
$$
\partial_\infty(\hat{P} \times \mathbb{C}) \simeq \left\{ (x,z) \in \hat{P} \times \mathbb{C} \mid g(x) + \tfrac{|z|^2}{2} = R \right\}
$$
for some fixed $R \gg 0$.

The Lefschetz fibration $\operatorname{proj}_\C$ induces a compatible open book decomposition on $\partial_\infty(\hat{P} \times \mathbb{C})$ as follows. The binding is given by
$$
B := \partial_\infty(\hat{P} \times \mathbb{C}) \cap \{z = 0\} \cong \partial_\infty \hat{P},
$$
and the map
$$
\frac{z}{|z|} \colon \partial_\infty(\hat{P} \times \mathbb{C}) \setminus B \to S^1
$$
defines the open book. The pages are the closures of the fibers
$$
{\operatorname{proj}_\C}^{-1}(R_{>0} \cdot e^{i\theta}) \cap \partial_\infty(\hat{P} \times \mathbb{C}),
$$
which are naturally identified with $P$. The monodromy of this open book is trivial and each page can be identified with $(P,{\eta})$ for the pull-back of the induced contact form.
\end{example}

\section{Pre-Lagrangians and pre-Lagrangian fillings}
To study fillability and classifications of Legendrians we will need to develop the notion of pre-Lagrangians and pre-Lagrangian fillings in contact manifolds. The definition of pre-Lagrangians itself is standard \cite{eliashberg1995lagrangian}, while the notion of pre-Lagrangian filling is new.
\begin{definition}
A closed $n+1$-dimensional submanifold $\mathcal{L} \subset (Y,\xi)$ of a contact manifold is \emph{pre-Lagrangian} if it admits a Lagrangian lift to the symplectization $(\R \times Y,d(e^t\alpha))$ or, equivalently, if there is a smooth function $G \colon \mathcal{L} \to \R$ for which the one-form $e^G(\alpha|_{T\mathcal{L}})$ is closed.
\end{definition}
Recall that a pre-Lagrangian $\mathcal{L}$ carries a hyper-plane distribution $\ker (\alpha|_{T\mathcal{L}}) \subset T\mathcal{L}$ that defines a non-singular foliation, so called \emph{characteristic foliation}. When considering pre-Lagrangians with boundary, in the context of Lagrangian fillability of Legendrians, it is convenient to relax the condition of being pre-Lagrangian along the boundary and also allow singularities there. The following definition of a type of pre-Lagrangians is inspired by the work of Courte--Ekholm in \cite{CE}.
\begin{definition}
An $n+1$-dimensional submanifold $\mathcal{L} \subset (Y^{2n+1},\xi)$ with Legendrian boundary $\Lambda$ of a $2n+1$-dimensional contact manifold is an \emph{exact pre-Lagrangian filling} of $\Lambda$ if there exists a smooth function
    $$G \colon \mathcal{L} \setminus  \partial\mathcal{L} \to \R$$ 
     which satisfies the property that

    \begin{itemize}
        \item $e^{G}(\alpha|_{T\mathcal{L}})$ is exact on $\mathcal{L} \setminus  \partial\mathcal{L}$.
        \item For some smooth identification
        $$(-\epsilon,0]_s \times \partial \mathcal{L}\hookrightarrow \mathcal{L}$$
        of a collar neighborhood of the boundary of $\mathcal{L}$, we can write $G=\tilde{G}+H$ for some smooth function $H\colon \mathcal{L}\to \R$, where the function $\tilde{G}=G-H$ only depends on the collar coordinate $s$, and satisfies $$\lim_{s \to 0} \tilde{G}(s)=\lim_{s \to 0} \tilde{G}'(s)=+\infty.$$
    \end{itemize} 
\end{definition}
It is important to note that the exact pre-Lagrangian filling does not necessarily satisfy the pre-Lagrangian condition along the boundary. In fact, as we will see, the pre-Lagrangian fillings that we are interested here satisfy the properties of Definition \ref{def:halfbook} below, which implies that their characteristic foliations typically have singularities in the boundary. This means that these submanifolds only are pre-Lagrangian in the standard sense in their interiors. The definition is motivated by the following result, by which an exact pre-Lagrangian filling can be deformed into an exact Lagrangian filling of its Legendrian boundary.

\begin{proposition}
\label{prop:lagfilling}
    An exact pre-Lagrangian filling $\mathcal{L} \subset (Y,\xi)$ of the Legendrian $\Lambda$ admits a smooth lift to the half symplectization $((-\infty,0] \times Y,d(e^t\alpha))$ with boundary in $\{0\} \times Y$ that can be perturbed to an embedded exact Lagrangian inside $(-\infty,0] \times Y$ with boundary contained in $\{0\} \times Y$, which is cylindrical near the boundary.
\end{proposition}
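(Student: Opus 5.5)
The lift is simply the graph of $-G$ on the interior. The exactness of $e^G\alpha|_{T\mathcal{L}}$ together with the blow-up of $\tilde G$ at the boundary forces this graph to go to $-\infty$ in the $t$-direction as $s\to 0$, so it has to be reparametrized, tilted and cut off near the boundary. The plan is as follows.

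On $\mathcal{L}^\circ=\mathcal{L}\setminus\partial\mathcal{L}$, write $e^{G}\alpha|_{T\mathcal{L}}=dF$. The map $x\mapsto (-G(x),x)\in \R\times Y$ pulls back the Liouville form $e^t\alpha$ to $e^{-G}\alpha|_{T\mathcal{L}}$. This is the wrong sign, so I take instead $\iota(x)=(t(x),x)$ with $t=-G+c$ chosen so that $e^{t}\alpha|_{\mathcal L}$ is closed. In the convention of the definition, closedness of $e^{G}\alpha$ means the correct lift is $t=G$. In either case the lift is an embedded exact Lagrangian $\mathcal{L}^\circ\to\R\times Y$, since it is a graph over an embedded submanifold, and its primitive is $F$.

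The second step is to understand the end of this lift using the collar $(-\epsilon,0]_s\times\partial\mathcal{L}$ and the splitting $G=\tilde G(s)+H$, with $H$ smooth up to the boundary. Since $\tilde G\to+\infty$, I use the sign convention in which the lift is $t=-G$. As $s\to0$ this gives $t\to-\infty$ uniformly, with bounded error $H$. Reparametrizing the collar by $t$ itself, which is possible since $\tilde G'\to+\infty$ makes $s\mapsto\tilde G(s)$ a diffeomorphism near $0$, exhibits the end as a graph over $(-\infty,-T]\times\partial\mathcal{L}$ converging in $C^1$ to the cylinder $\R\times\Lambda$. The closure of the lift inside $[-\infty,0]\times Y$ is therefore a smooth manifold with boundary $\Lambda$.

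To place the boundary at level $0$ as stated, I compose with the symplectic dilation/reflection. Alternatively, I lift with $t=-G$ and then apply the map $(t,y)\mapsto(\cdot)$ that turns the negative end into the boundary at $\{0\}\times Y$, after compressing $(-\infty,-T]$ to $(-1,0]$ by a Liouville-flow-compatible change of coordinates. More simply, one can note that the Liouville flow is conformal, so the lift can be translated to lie in $(-\infty,0]\times Y$ with the cylindrical part near the boundary.

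The final step, which I expect to be the main obstacle, is making the end exactly cylindrical and controlling the primitive. The lift is only asymptotically cylindrical, with the deviation governed by $H$ and $s(t)$. I would write the end, in a Weinstein neighborhood $J^1(\R\times\Lambda)$ of the cylinder, as the graph of $df$ with $f\to$ const in $C^1$. I would then cut it off, replacing $f$ by $\beta f$ for a bump function $\beta$ that vanishes near the boundary. Because the $C^1$-norm is small there, the result stays embedded and exact, and the primitive becomes locally constant on the cylindrical piece. The delicate points are the uniform $C^1$ smallness, which comes from $\tilde G'\to\infty$ dominating $dH$, and the fact that the Weinstein neighborhood of a cylinder in the symplectization is only conformally uniform; I would handle the latter by working in the contactization chart near $\Lambda$.
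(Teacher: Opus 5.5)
\textbf{There is a genuine gap: the sign of the lift.} Your later steps are built on the wrong choice.

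For $\iota(x)=(\tau(x),x)$ one has $\iota^*(e^t\alpha)=e^{\tau}\alpha|_{T\mathcal L}$. With the symplectization $d(e^t\alpha)$ fixed by the statement, the natural Lagrangian lift is therefore $t=G$. You observe this yourself, but then switch to ``the sign convention in which the lift is $t=-G$''. No such convention exists. From $d(e^G\alpha|_{T\mathcal L})=0$ one gets $d\alpha|_{T\mathcal L}=-dG\wedge\alpha|_{T\mathcal L}$, hence
\[
d\bigl(e^{-G}\alpha|_{T\mathcal L}\bigr)=-2e^{-G}\,dG\wedge\alpha|_{T\mathcal L},
\]
which is nonzero in general. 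For instance, near the binding of a half open book it contains a nonzero multiple of $dr\wedge d\theta$. So the graph of $-G$ is not Lagrangian. With the correct lift, $\tilde G\to+\infty$ sends $\partial\mathcal L$ to the \emph{positive} end $t\to+\infty$, which is exactly what makes the result a filling rather than a cap.

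Your third step tries to move a negative end to $\{0\}\times Y$, and it cannot work. The only conformally symplectic maps of the form $(t,y)\mapsto(\psi(t),y)$ for $d(e^t\alpha)$ are translations, since one needs $\psi'=1$. So neither the reflection $t\mapsto -t$ nor a compression of $(-\infty,-T]$ into $(-1,0]$ is available. Translating a lift whose end sits at $t=-\infty$ leaves that end at $-\infty$. Likewise, the ``closure in $[-\infty,0]\times Y$'' does not put $\Lambda$ on a contact-type level, since $d(e^t\alpha)$ degenerates as $t\to-\infty$.

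\textbf{How to repair it.} Use $t=G$; the argument then goes through essentially as in your last step, which is the paper's proof.
\begin{itemize}
\item $G$ is bounded below on $\mathcal L\setminus\partial\mathcal L$, and $\partial_sG=\tilde G'(s)+\partial_sH\to+\infty$ in the collar.
\item Hence, for $A\gg0$, the sublevel set $\mathcal L_{\le A}=G^{-1}(-\infty,A]$ is compact. Its lift is an exact Lagrangian with boundary in $\{A\}\times Y$.
\item The part of this lift in $[A-1,A]\times Y$ is $C^\infty$-close to $[A-1,A]\times\Lambda$.
\item In a Weinstein neighborhood, write that part as the graph of $dh$ with $h$ small. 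Replacing $h$ by $\rho h$, with $\rho$ vanishing near $t=A$, makes it cylindrical near the boundary.
\item Translating by $-A$ places it in $(-\infty,0]\times Y$.
\end{itemize}
Truncating at a finite level and working on a slab of fixed width also settles your worry that the Weinstein neighborhood of the half-infinite cylinder is only conformally uniform. After translation, one fixed neighborhood of $[-1,0]\times\Lambda$ suffices.
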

\begin{proof}
For any $A \gg 0$ we obtain a lift $\{t=G\}$ to the symplectization of the subdomain
$$\mathcal{L}_{\le A} = G^{-1}(-\infty,A] \subset \mathcal{L}$$ 
which is an exact Lagrangian with boundary contained in $\{A\} \times Y$, and normal transverse to the latter contact type hypersurface. For any $A  \gg 0$ the boundary of $\mathcal{L}_{\le A}$ is not necessarily Legendrian. However, it becomes arbitrarily $C^\infty$-close to the Legendrian $\Lambda$ as $A \to +\infty$. In addition, because of the assumption on the derivative of $G'$ it follows that $$\mathcal{L}_{\le A} \: \cap \: [A-1,A] \times Y$$
becomes arbitrarily $C^\infty$-close to the trivial Lagrangian cylinder
$$ [A-1,A] \times \Lambda \subset \R \times Y$$
over $\Lambda$.

Using Weinstein's Lagrangian standard neighborhood theorem we write $\mathcal{L}_{\le A}$ as the section of a $C^\infty$-small exact one-form in the cotangent bundle
$$D^*_\epsilon ([A-1,A] \times \Lambda) \hookrightarrow [A-1,A] \times Y.$$
Since the one-form is exact with an arbitrarily $C^\infty$-small primitive, say $\mathcal{L}_{\le A}$ is the graph of $dh$, we can use a bump function $\rho$ to cut off the primitive, deforming $\mathcal{L}_{\le A}$ to coincide with the graph of $d(\rho h)$ which is cylindrical Legendrian near the boundary if $\rho$ vanishes there.
\end{proof}

The following type of exact pre-Lagrangian filling will be of particular importance.
\begin{definition}
\label{def:halfbook}
An $n+1$-dimensional submanifold $\mathcal{L} \subset (Y^{2n+1},\xi)$ of a $2n+1$-dimensional contact manifold with non-empty Legendrian boundary has a characteristic foliation $\ker (\alpha|_{T\mathcal{L}})$ of \emph{half open book-type} if the following holds:
\begin{itemize}
\item the characteristic foliation is tangent to the boundary (possibly with singularities), while the foliation is non-singular in the interior.
\item doubling $\mathcal{L}$ along its boundary produces a closed manifold with an open book decomposition, where the foliation is identified with the singular foliation produced by the pages, and $\mathcal{L}$ is given by half of the open book bounded by a union of two pages.
\end{itemize}
In particular, there is a diffeomorphism
$$\mathcal{L} \setminus \partial \mathcal{L} \cong \mathcal{P} \times (0,\pi)$$
of the interior, where $\mathcal{P}$ is the interior of the pages of the open book, and where each $\mathcal{P} \times \{s\}$ is a Legendrian leaf of the foliation.
\end{definition}

\begin{proposition}\label{prop: pre-lag}
    If $\mathcal{L}$ has characteristic foliation of half open book-type, then $\mathcal{L}$ is an exact pre-Lagrangian filling.
\end{proposition}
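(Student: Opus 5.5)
The plan is to construct the function $G$ explicitly from the half open book structure. By Definition \ref{def:halfbook}, the interior of $\mathcal{L}$ is identified with $\mathcal{P}\times(0,\pi)_s$, and each slice $\mathcal{P}\times\{s\}$ is a Legendrian leaf of the characteristic foliation. Hence $\alpha|_{T\mathcal{L}}$ vanishes on $T\mathcal{P}$. On the interior it can therefore be written as $\alpha|_{T\mathcal{L}} = a(p,s)\,ds$ for a nowhere-vanishing function $a$, which we may take positive after choosing the orientation of $s$. Nonsingularity of the foliation in the interior guarantees that $a\neq 0$.

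The first step is to make the form closed. Set $G=-\log a$, so that $e^{G}\alpha|_{T\mathcal{L}} = ds$ on the interior. This form is closed, and in fact exact, since $\mathcal{P}\times(0,\pi)$ retracts onto a page and $ds=d(s)$ is globally exact there. This establishes the first condition of an exact pre-Lagrangian filling.

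The second step, which is the main work, is the boundary behaviour. Near $\partial\mathcal{L}=\Lambda$ we use the local model of the doubled open book. In the double, a neighbourhood of the binding looks like $B\times D^2$ with polar coordinates $(r,\vartheta)$, where $\vartheta$ is the page angle. The half $\mathcal{L}$ corresponds to $\vartheta\in[0,\pi]$, and the collar coordinate of the boundary is essentially $r$. Note that $\Lambda=\partial\mathcal{L}$ consists of the two boundary pages, plus the binding where they meet, and the collar should be taken accordingly. I will use the following approach, though the precise identification of which part of $\partial\mathcal{L}$ sits where needs care.

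The relevant observation is that the foliation is tangent to the boundary. Consequently, as one approaches the boundary, the leaves accumulate on it and $a\to 0$: the form $\alpha|_{T\mathcal{L}}$ vanishes on $T\Lambda$ since $\Lambda$ is Legendrian, and at the boundary also in the transverse direction, since the boundary leaf is a leaf. So $G=-\log a\to+\infty$. To obtain the required splitting $G=\tilde G+H$, I would reparametrize $s$ near the ends $s\to 0,\pi$ using the collar coordinate $\sigma\in(-\epsilon,0]$. I would write $a = e^{-H}\,\phi(\sigma)$, where $\phi$ depends only on $\sigma$ and vanishes to first order at $\sigma=0$, and $H$ is smooth up to the boundary.

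The main obstacle is verifying this factorization smoothly. Concretely, one needs that $a$ vanishes exactly to first order along $\Lambda$, uniformly in the other variables, so that $a/\sigma$ extends to a smooth positive function. I would check this with a Taylor expansion in the collar, using that $d\alpha$ is nondegenerate on $\xi$: a degenerate vanishing would force the lifted Lagrangian to be tangent to $\Lambda$ to higher order, which contradicts $\mathcal{L}$ being a smooth submanifold with Legendrian boundary whose foliation is modelled on the open book. If this holds, we take $\tilde G=-\log|\sigma|$ (possibly after modifying $\sigma$ to a convenient reparametrization such as $\sigma\mapsto -\sigma^2$). Then $\tilde G\to+\infty$ and $\tilde G'\to+\infty$ as $\sigma\to 0$, giving the second condition. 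Proposition \ref{prop:lagfilling} is not needed here, though it is the intended next application.
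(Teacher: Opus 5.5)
Your first step is fine: on the interior $\alpha|_{T\mathcal{L}}=a\,ds$ with $a>0$, and $G=-\log a$ makes $e^{G}\alpha|_{T\mathcal{L}}=ds$ exact. The gap is in the boundary step. It rests on the claim that $a\to 0$ at $\partial\mathcal{L}$, to first order in a collar coordinate, and this does not follow. That the boundary pages are leaves only means $\alpha$ annihilates $T\Lambda$; it says nothing about the transverse direction. The definition of half open book-type explicitly allows the foliation to be nonsingular along the boundary pages.

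In the case the paper actually uses (Lemma \ref{lemma:pre_lag}), $\alpha|_{T\mathcal{L}}=d\theta$ away from the binding and $g(r)\,d\theta=\tfrac{r^2}{2}\,d\theta$ near it. So with $s=\theta$ you get $a\equiv 1$ near the interiors of the boundary pages. There your $G=-\log a$ stays bounded, violating $\lim\tilde G=+\infty$. Near the binding, $a$ vanishes only along the codimension-two binding and to second order in $r$. With collar coordinate $\sigma=-r\sin\theta$, the quotient $a/|\sigma|=r/(2\sin\theta)$ has no smooth positive extension. Your Taylor-expansion argument via the lifted Lagrangian is also circular, since the lift $\{t=G\}$ is exactly what $G$ is supposed to produce.

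The missing idea is to use the freedom that remains after normalizing. Because the leaves are the level sets of the page parameter, $e^{\varphi(\theta)}\,d\theta=d\Phi(\theta)$ is still exact for any function $\varphi$ of $\theta$ alone. The paper writes $\alpha|_{T\mathcal{L}}=e^{-H}\beta$, with $H$ smooth up to the boundary and $\beta$ the standard open book form ($d\theta$ away from the binding, $g(r)\,d\theta$ near it). It then takes $G=-\log g(r)+\varphi(\theta)+H$, where $\varphi\to+\infty$ as $\theta\to 0,\pi$, with $\varphi'\to-\infty$ at $\theta\to 0$ and $\varphi'\to+\infty$ at $\theta\to\pi$.

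So the blow-up at the boundary is inserted by hand through $\varphi$, not extracted from a degeneration of $\alpha|_{T\mathcal{L}}$ that need not exist. In your language, you would have to choose $s=\Psi(\theta)$ with $\Psi'\to\infty$ at the ends, which gives $\varphi=\log\Psi'$. Such an $s$ is not a smooth collar coordinate.
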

\begin{proof}
By assumption there exits one-form that defines the singular foliation by pages of the open book. Consider the one-form $\beta$ that is given by  $d\theta$  away from the neighborhood of the binding  and by $g(r)d\theta$ near the binding, where $g(r)=1$ for $r \ge 2\epsilon$, and $g(r)=r^2/2$ for $r \le \epsilon$, and otherwise non-vanishing. By assumption, the one-form defining the characteristic foliation $\alpha|_{T\mathcal{L}}$ satisfies $\beta=e^H\alpha|_{T\mathcal{L}}$ for some smooth function $H \colon \mathcal{L} \to \R$. Note that $e^{-\log g(r)} \beta=d\theta$ is a closed one-form in the interior of the pages, but that the form is non-smooth precisely along the binding.

We may assume that $\partial\mathcal{L}$ is given as the union of the two pages $\{\theta=0,\pi\}$ including the binding.

To construct the sought function $G \colon \mathcal{L} \setminus \partial \mathcal{L} \to \R$ we start by considering $\tilde{G}=-\log g(r)+\varphi(\theta)$ where $\varphi \colon [0,\pi] \to \R_{\ge 0}$ satisfies the property that $\varphi(\theta)$ tends to $+\infty$ as $\theta \to 0,\pi$, while $\varphi'(\theta)$ tends to $-\infty$ as $\theta \to 0$ and to $+\infty$ as $\theta \to \pi.$ We may finally take $G=\tilde{G}+H$ as our sought function, which thus has the property that $e^G\alpha|_{T\mathcal{L}}=d\theta$ away from the binding.
\end{proof}

We will show that it is easy to construct a pre-Lagrangian whose characteristic foliation is of half open book-type, by starting with an Legendrian embedding of a single page. Since the page is a Legendrian with non-empty boundary, and since such Legendrians satisfy an $h$-principle \cite{CieliebakEliashbergMishachev}, there are plenty of such embeddings to start from, and they are determined by their classical invariants. First we recall the definition of an Legendrian ambient surgery from Dimitroglou Rizell's work \cite{Dimitroglou:Ambient}, which deforms a Legendrian $\Lambda \subset (Y,\xi)$ by performing an ambient $k$-surgery using an embedded isotropic surgery $k+1$-disk $D^{k+1} \subset (Y,\xi)$ that intersects $\Lambda$ only along its boundary. In addition there is a standard Lagrangian $k+1$-handle cobordism from the original Legendrian $\Lambda$ to the Legendrian $\Lambda_D$ resulting from the Legendrian ambient surgery.   

It is not difficult to construct an exact Lagrangian filling in $J^1(\overline{P} \times \R)$ given by a front consisting of the union over two sheets $\pm j^1F$ joining together at a cusp edge which is smoothly diffeomorphic to $\overline{P}$. Such a filling can be seen to be decomposable. Below we show how to construct exact pre-Lagrangian open books whose Legendrian boundary admits fillings of this type.
\begin{theorem}\label{thm:existence} 
    Any Legendrian embedding $\overline{\mathcal{P}}^n \hookrightarrow (Y^{2n+1},\xi)$ of a smooth manifold with boundary can be extended to the page of an exact pre-Lagrangian filling of half open book-type inside a standard jet-neighborhood $J^1\overline{\mathcal{P}}$ of $\overline{\mathcal{P}} \subset Y$.

    Furthermore, this Legendrian admits a regular exact Lagrangian filling inside $\R \times J^1\overline{P} \cong T^*(\overline{P} \times \R)$, moreover, is flexible when $n >1$. This filling is in fact decomposable and consists of standard Lagrangian handles of index $<n$. 
\end{theorem}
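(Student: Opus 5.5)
By the Legendrian neighborhood theorem, I would first identify a neighborhood of the embedded page $\overline{\mathcal{P}}\subset Y$ with a neighborhood of the zero section in $(J^1\overline{\mathcal{P}}, dz-p\,dq)$. Because $\overline{\mathcal{P}}$ has boundary, I would first extend the embedding slightly past $\partial\overline{\mathcal{P}}$ to an open manifold $\mathcal{P}'$, and then work in $J^1\mathcal{P}'$. All further constructions will be carried out in this model, so the statement becomes a local statement about jet spaces.

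Fix a function $F\colon \overline{\mathcal{P}}\to\R_{\ge 0}$ that is positive in the interior, vanishes exactly on $\partial\overline{\mathcal{P}}$ to first order, and has $\partial\overline{\mathcal{P}}$ as a regular level. Take the Legendrian $\Lambda\subset J^1\overline{\mathcal{P}}$ with front equal to the union of the two graphs $z=\pm F$. These sheets meet along $\partial\overline{\mathcal{P}}$. I would replace $F$ near the boundary by something like $F\sim (\text{dist})^{3/2}$, so that the two graphs join into a standard cusp edge and $\Lambda$ is smooth and diffeomorphic to the double of $\overline{\mathcal{P}}$. For the pre-Lagrangian I would take the region between the sheets in the front, lifted to the hypersurface
$$\mathcal{L}=\{(q,p,z): p=0,\ |z|\le F(q)\},$$
an $(n+1)$-dimensional submanifold with boundary (after smoothing at the cusp). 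On $\mathcal{L}$ we have $\alpha|_{T\mathcal{L}}=dz$, so the characteristic foliation in the interior consists of the leaves $\{z=c\}\cong\{F>|c|\}$. This is not yet half open book-type, because the leaves change topology. I would therefore reparametrize, using $s=z/F(q)\in[-1,1]$ and replacing $\mathcal{L}$ by the graph $p=\sigma(q,s)$ adapted so that the leaves become $\{s=\mathrm{const}\}$. Concretely, I would choose the Legendrian leaves $\{(q, s\,dF(q), sF(q))\}$, $s\in[-1,1]$. These are Legendrian graphs $j^1(sF)$, they are disjoint in the interior, and they meet only along $\partial\overline{\mathcal{P}}$, where $F=0$. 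Their union $\mathcal{L}=\bigcup_s j^1(sF)$ is a submanifold whose interior is $\mathcal{P}\times(-1,1)$ with Legendrian leaves. The boundary consists of the pages $s=\pm1$ together with the binding $\partial\overline{\mathcal{P}}$, and the boundary $j^1F\cup j^1(-F)$ is Legendrian. Checking the local model near the binding, where the $s$-family rotates like $\theta\in[0,\pi]$ around a $D^2$ factor, shows that doubling gives an open book whose pages are the leaves. Proposition \ref{prop: pre-lag} then shows $\mathcal{L}$ is an exact pre-Lagrangian filling.

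For the filling statement, Proposition \ref{prop:lagfilling} already gives an exact Lagrangian filling. To obtain decomposability, I would instead build a filling directly in $J^1(\overline{\mathcal{P}})\times\R_t\cong$ a region of $T^*(\overline{\mathcal{P}}\times\R)$, as suggested before the theorem. Take a handle decomposition of $\overline{\mathcal{P}}$ relative to nothing, built from a Morse function with no index-$n$ critical points; this is possible because $\partial\overline{\mathcal{P}}\neq\emptyset$, so $\overline{\mathcal{P}}$ deformation retracts to a complex of dimension $<n$. Now move the function $F$ by a Morse-type isotopy of fronts: the two-sheeted front over $\{F>c\}$ is born as a standard unknotted sphere at a local maximum. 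Each time the level $c$ decreases through a critical point of index $k$, the front undergoes an ambient Legendrian surgery in the sense of \cite{Dimitroglou:Ambient}, which corresponds to a standard Lagrangian handle. I would then check that the handle has index $n-k$ or $k$ and, after choosing $F$ appropriately, is always $<n$.

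By \cite{EliashbergGanatra} and \cite[Proposition 3.2]{DRG}, decomposable fillings built from handles of index $<n$ are regular, and they are flexible when $n>1$, since the complementary Weinstein cobordism has only subcritical or loose critical handles.

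I expect the main obstacle to be the bookkeeping of handle indices. The relevant points are the orientation of the Morse function (maxima versus minima of $F$ giving births), the requirement that no index-$n$ Lagrangian handle appear, and the need to place a single $0$-handle at the start. The smoothness of $\mathcal{L}$ along the binding is a secondary issue, which the $3/2$-power normalization should handle.
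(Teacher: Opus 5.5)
The gap is in your construction of $\mathcal{L}$. The family $\mathcal{L}=\bigcup_{s\in[-1,1]}j^1(sF)$ is a smooth manifold with boundary $\Lambda$. Its characteristic foliation, however, is \emph{not} of half open book-type, and the local check near the binding that you appeal to fails.

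Work in a normal slice to the cusp edge, where $F=x^{3/2}$. There the leaf $j^1(sF)$ is $p=\tfrac32 s\sqrt{x}$, $z=sx^{3/2}$. So $\mathcal{L}$ is the graph $z=\tfrac23 px$ over $\{x\ge \tfrac49 p^2\}$, and $\alpha|_{T\mathcal{L}}$ is a positive multiple of $2x\,dp-p\,dx$, which is a node with unequal eigenvalues. Every leaf with $s\neq 0$ is tangent at the binding to one of the two boundary branches $s=\pm1$. In the open book model, by contrast, each ray $\{\theta=\mathrm{const}\}$ leaves the binding in its own direction. A diffeomorphism is injective on tangent directions at the singular point, so the doubled foliation is not an open book. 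Consequently no smooth $H$ gives $e^{H}\alpha|_{T\mathcal{L}}=\tfrac{r^2}{2}d\theta$, which is what the proof of Proposition \ref{prop: pre-lag} needs.

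This is exactly the left-hand picture of Figure \ref{fig:def_sF}, and the paper explicitly points out that this family fails. The $3/2$-normalization does give smoothness of $\mathcal{L}$, but it is precisely what produces the tangency. Since $\alpha|_{T\mathcal{L}}=F\,ds$, this $\mathcal{L}$ is still an exact pre-Lagrangian filling; take $G=-\tfrac32\log\bigl(F^{2/3}(1-s^2)\bigr)$. But the statement asserts half open book-type, and Theorem \ref{theorem:flex} relies on it.

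The missing step is the one the paper carries out. Keep the sheets $j^1(\pm F)$, and keep $j^1(sF)$ away from the binding. Near the cusp edge, deform the intermediate leaves so that in each normal slice they form a fan of rays. Then integrate the deformed leaves to new functions $F_s$ and take $\bigcup_s j^1(F_s)$. Concretely, use coordinates $u=x-\tfrac49p^2$, $v=p$. The surface $z=\tfrac12 uv+\tfrac{8}{27}v^3$ over $\{u\ge 0\}$ has boundary exactly $j^1(\pm x^{3/2})$. On it, $\alpha=dz-p\,dx$ restricts to $\tfrac12(u\,dv-v\,du)$, which is the half open book model.

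Your treatment of the filling is essentially the paper's argument: standard spheres born at the maxima of $F$, then ambient surgeries through the remaining critical points, realized by standard Lagrangian handles. To settle the bookkeeping, the condition is that $F$ has no interior local minima, i.e.\ all Morse indices are $\ge 1$, as in the paper. An index-$k$ critical point of $F$ then adds an $(n-k)$-handle to the superlevel set and a Lagrangian handle of index $n-k\le n-1$ to the filling. Your ``no index-$n$ critical points'' can only refer to $-F$, since $F$ needs its maxima for the births.
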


\begin{proof}
We start with the construction of the boundary $\Lambda \cong \overline{\mathcal{P}} \sqcup_{\partial \overline{\mathcal{P}}} \overline{\mathcal{P}}$ of the pre-Lagrangian, by declaring that its image under the front projection $J^1\overline{\mathcal{P}} \to \overline{\mathcal{P}} \times \R_z$ is given by a union of two sheets $j^1(\pm F)$ that are joined along a cusp edge that is equal to the boundary of $\overline{\mathcal{P}}$ at $\{z=0\}$, and whose front has no other singularities. In particular, the function $F \colon \overline{\mathcal{P}} \to \R_{\ge 0}$ vanishes precisely along the boundary of $\overline{\mathcal{P}}$ where it behaves like $x^{3/2}$ for a coordinate $x$ defined on the collar of $\overline{\mathcal{P}}$ and is positive in the interior $\overline{\mathcal{P}} \setminus \partial \overline{\mathcal{P}}$.

The pre-Lagrangian with characteristic foliation of half open book-type can be constructed by starting with the one-parameter family $s\Lambda$ of smooth Legendrians for $s \in (0,1]$ that is equal to $\Lambda$ for $s=1$ and which degenerates onto $\overline{\mathcal{P}}$ for $s=0$. Note that, even though this union is a smooth $n+1$-dimensional manifold with boundary $\Lambda$, whose characteristic leaves corresponding to the Legendrian sheets, but that the characteristic foliation is not of half open book-type. The reason is that the family of sheets does not have the correct behavior near the cusp edge where they meet. To amend this, we deform the Legendrian leaves in an arbitrarily small neighborhood near the cusp edge while fixing the Legendrians $j^1(\pm s F)$. This can be done by considering the following model.

First, consider the foliation of a subset of $\R^2$ given by the graphs $\frac{d}{dx}\pm s\cdot x^{3/2}$, $s \in [0,1]$, $x \ge 0$. Deform the leaves in the interior of the domain foliated by the graphs so that the singular foliation becomes diffeomorphic to a family of linear rays through the origin that foliates the right half-plane. The two different singular foliations on the right half-plane are shown in Figure \ref{fig:def_sF}. This can be done in an arbitrarily small neighborhood of the singularity, i.e.~the origin. Then integrate the deformed foliation to a new family of functions, and use the one-jets of these deformed functions to build the sought exact pre-Lagrangian filling $\mathcal{L}$ of half open book-type. Note that the binding of this half open book is given by the cusp edge $\partial \overline{\mathcal{P}}$, while the closure of the pages are given by the Legendrian sections $j^1(\pm F_s) \cong \overline{\mathcal{P}}$ where $F_s=s\cdot F$ away from some arbitrarily small neighborhood of the binding.

\begin{figure}[ht]
    \centering
    \scalebox{.7}{
    \begin{minipage}[c]{0.4\textwidth}
\centering
\begin{tikzpicture}
  \draw[black] (0, -3) -- (0, 3);
  \draw[black] (0, 0) -- (4, 0);
  \draw[black]   plot[smooth, domain=-sqrt(4):sqrt(4)] ( \x * \x, \x);
  \draw[black]   plot[smooth, domain=-sqrt(8):sqrt(8)] (0.5 * \x * \x, \x);
  \draw[black]   plot[smooth, domain=-sqrt(2):sqrt(2)] (2 * \x * \x, \x);
  \draw[black]   plot[smooth, domain=-sqrt(1):sqrt(1)] (4 * \x * \x, \x);
  \end{tikzpicture}
\end{minipage}
}
\hspace{3\baselineskip}
\scalebox{.7}{
\begin{minipage}[c]{0.4\textwidth}
\centering
\begin{tikzpicture}
   \draw[black] (0, -3) -- (0, 3);
  \draw[black] (0, 0) -- (4, 0);
 \draw[black, domain=0:3] plot (\x, {0.9*\x});
 \draw[black, domain=0:4] plot (\x, {0.4*\x});
 \draw[black, domain=0:2] plot (\x, {1.5*\x});

  \draw[black, domain=3:0] plot (\x, {- 0.9*\x});
  \draw[black, domain=4:0] plot (\x, {-0.4*\x});
   \draw[black, domain=2:0] plot (\x, {-1.5* \x});
\end{tikzpicture}
\end{minipage}
}
\caption{Two non-diffeomorphic singular foliations. On the left, all leaves are tangent at the origin while, on the right, the singular foliation is of half open book-type. Note that they can be made diffeomorphic after an arbitrarily small deformation near the singularity.}
\label{fig:def_sF}
\end{figure}

We then show that the Legendrian $\Lambda$ can be obtained by a sequence of ambient Legendrian $k$-surgeries for $k<n-1$. Start by choosing a Morse function $f\colon \overline{\mathcal{P}} \to \R_{\ge 0}$ whose positive gradient is inwards pointing along the boundary, and whose critical points have Morse index at least $1$. We may assume that $F$ and $f$ coincide away from some arbitrarily small neighborhood of the boundary, and that their critical points are the same inside the interior of the page $ \overline{\mathcal{P}} \setminus \partial \overline{\mathcal{P}}$.

We start by placing small standard fillable Legendrian spheres around the maxima of $f$. One can then perform Legendrian ambient $k$-surgeries as defined in \cite{Dimitroglou:Ambient} on this Legendrian, by using the isotropic surgery disks contained inside the zero section $\overline{\mathcal{P}}=j^10 \subset J^1\overline{\mathcal{P}}$ that are given by the stable manifolds of the positive gradient flow of $f$. More precisely, we inductively perform ambient surgeries on the disks with boundary on the Legendrian produced in the previous stage of the process, and then continue. Note that the ambient surgery disks all intersect the cusp edge of the Legendrian produced  in a fattened sphere. Since the Legendrian resulting from the ambient surgery construction consists of union of two 1-jets glued along cusp edge, one can readily check that the obtained Legendrian is Legendrian isotopic to the original $\Lambda$, which also consists of the union of two sheets. See Figure \ref{fig:ambient_surgery} for the surgery in the case of a Legendrian surface.

The Lagrangian standard handle-attachments defined by the surgery build a decomposable exact Lagrangian filling of $\Lambda$ which is diffeomorphic to the exact pre-Lagrangian filling $\mathcal{L}$.
\end{proof}

\begin{figure}[ht]
    \centering
    \begin{minipage}[c]{0.4\textwidth}
\centering
\begin{tikzpicture}[scale=1.2]

\def\basecurve(#1,#2){
    plot[domain=-1:1, samples=200, smooth]
    (\x, {#1*(1-\x*\x)*sqrt(1-\x*\x)})
}

\foreach \y in {1,0.5} {
    \draw[line width=0.5pt]
       \basecurve(\y,0);
    \draw[line width=0.5pt]
        plot[domain=-1:1, samples=200, smooth]
        (\x, {-\y*(1-\x*\x)*sqrt(1-\x*\x)});
}
\draw[red, dashed, thick] (0,-0.5) arc (-90:90:3 and 0.5);
\end{tikzpicture}
    \label{fig:leg_sphere}
\end{minipage}
\hspace{0.5\baselineskip}
     \begin{minipage}[c]{0.5\textwidth}
\centering
    \begin{tikzpicture}[scale=1]

\def\basecurve(#1,#2){
    plot[domain=-1:1, samples=200, smooth]
    (\x, {#1*(1-\x*\x)*sqrt(1-\x*\x)})
}

\foreach \y in {1,0.5} {
    \draw[line width=0.5pt]
       \basecurve(\y,0);
    \draw[line width=0.5pt]
        plot[domain=-1:1, samples=200, smooth]
        (\x, {-\y*(1-\x*\x)*sqrt(1-\x*\x)});
}
\begin{scope}[xshift=3cm]
\foreach \y in {1,0.5} {
    \draw[line width=0.5pt]
       \basecurve(\y,0);
    \draw[line width=0.5pt]
        plot[domain=-1:1, samples=200, smooth]
        (\x, {-\y*(1-\x*\x)*sqrt(1-\x*\x)});
}
\end{scope}
\draw (1.5,0) ellipse (0.5 and 0.4);
\draw (1.5,0) ellipse (2.5 and 1.5);
\draw[red, dashed, thick] (0,0.5) arc (90:-90:3.5 and 0.5);
\end{tikzpicture}
    \end{minipage}
    \caption{Left: The standard Legendrian surgery sphere with red surgery arc. Right: The resulting manifold after  Legendrian ambient k-surgery.}
    \label{fig:ambient_surgery}
\end{figure}

\begin{remark}
    We believe that it is possible to show that the filling constructed in Theorem \ref{thm:existence} is Hamiltonian isotopic to the one produced by Proposition \ref{prop:lagfilling}. Since we do not need this fact, we will postpone the proof to later work.
\end{remark}

\begin{figure}[ht]
\centering
\begin{minipage}[c]{0.25\textwidth}
\centering
\rule{2cm}{0.6pt}
\end{minipage}
\hspace{0.5cm}
\begin{minipage}[c]{0.5\textwidth}
\centering
\begin{tikzpicture}[scale=1.2]

\def\basecurve(#1){
    plot[domain=-1:1, samples=200, smooth]
    (\x, {#1*(1-\x*\x)*sqrt(1-\x*\x)})
}

\foreach \y in {1,0.70,0.40,0.10} {
    \draw[line width=0.5pt] \basecurve(\y);
    \draw[line width=0.5pt]
        plot[domain=-1:1, samples=200, smooth]
        (\x, {-\y*(1-\x*\x)*sqrt(1-\x*\x)});
}

\end{tikzpicture}
\end{minipage}
\label{fig:leg_foliation}
\vspace{2\baselineskip}
\centering
\begin{minipage}[c]{0.25\textwidth}
\centering
\begin{tikzpicture}
  \fill[gray!40, opacity = 0.3] (0,0) ellipse (1 and 0.6);
  \draw (0,0) ellipse (1 and 0.6);
\end{tikzpicture}
\end{minipage}
\hspace{0.3cm}
\begin{minipage}[c]{0.5\textwidth}
\centering
\begin{tikzpicture}[scale=1.2]

\def\basecurve(#1,#2){
    plot[domain=-1:1, samples=200, smooth]
    (\x, {#1*(1-\x*\x)*sqrt(1-\x*\x)})
}

\foreach \y in {1,0.5,0.25,0} {
    \draw[line width=0.5pt]
       \basecurve(\y,0);
    \draw[line width=0.5pt]
        plot[domain=-1:1, samples=200, smooth]
        (\x, {-\y*(1-\x*\x)*sqrt(1-\x*\x)});
}
\draw (0,0) ellipse (1 and 0.6);
\end{tikzpicture}
    \label{fig:leg_foliation_ufo}
\end{minipage}
   \centering
\begin{minipage}[c]{0.25\textwidth}
\centering
\vspace{2\baselineskip}
\begin{tikzpicture}
\fill[gray!40, opacity = 0.3, even odd rule] (1.5,0) ellipse (0.8 and 0.4) (1.5,0) ellipse (2 and 1);
\draw (1.5,0) ellipse (0.8 and 0.4);
\draw (1.5,0) ellipse (2 and 1);
\end{tikzpicture}
\end{minipage}
\hspace{0.3cm}
   \begin{minipage}[c]{0.5\textwidth}
\centering
\vspace{2\baselineskip}
    \begin{tikzpicture}[scale=1]

\def\basecurve(#1,#2){
    plot[domain=-1:1, samples=200, smooth]
    (\x, {#1*(1-\x*\x)*sqrt(1-\x*\x)})
}

\foreach \y in {1,0.5,0.10} {
    \draw[line width=0.5pt]
       \basecurve(\y,0);
    \draw[line width=0.5pt]
        plot[domain=-1:1, samples=200, smooth]
        (\x, {-\y*(1-\x*\x)*sqrt(1-\x*\x)});
}
\begin{scope}[xshift=3cm]
\foreach \y in {1,0.5,0.10} {
    \draw[line width=0.5pt]
       \basecurve(\y,0);
    \draw[line width=0.5pt]
        plot[domain=-1:1, samples=200, smooth]
        (\x, {-\y*(1-\x*\x)*sqrt(1-\x*\x)});
}
\end{scope}
\draw (1.5,0) ellipse (0.5 and 0.4);
\draw (1.5,0) ellipse (2.5 and 1.5);
\end{tikzpicture}
    \end{minipage}
    \caption{Left: Different choices of the Legendrian embedding $\overline{\mathcal{P}}$ in Theorem \ref{thm:existence}. Right: Characteristic foliation of half open book-type corresponding to the choices of $\overline{\mathcal{P}}$ on the left.}
    \label{fig:leg_foliation_torus}
\end{figure}

 Recall the definition of a flexible regular Lagrangian fillings in the sense of Eliashberg--Ganatra--Lazarev from \cite{EliashbergGanatra}. By Theorem 4.2 in the same paper, the symplectomorphism class of flexible Lagrangian fillings only depend on classical topological data. Here we establish a similar uniqueness result for the Legendrian which admit an exact pre-Lagrangian filling whose characteristic foliation is of half open book-type.

\begin{theorem}\label{theorem:flex}
    Let  $\Lambda$ and $\Lambda'$  be two Legendrians that both admit exact pre-Lagrangian fillings with characteristic foliations of half open book-type. If the pages of two half open book have the same classical invariants (homotopy class and rotation classes), then the two Legendrians are Legendrian isotopic by a contact isotopy.
    \end{theorem}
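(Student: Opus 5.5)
The plan is to reduce everything to the $h$-principle for Legendrian embeddings of manifolds with non-empty boundary, as in \cite{CieliebakEliashbergMishachev}, applied to a single page. From that we will rebuild the Legendrians by the canonical model of Theorem \ref{thm:existence}.

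Let $\mathcal{L}$ and $\mathcal{L}'$ be the exact pre-Lagrangian fillings of half open book-type bounded by $\Lambda$ and $\Lambda'$. Fix a page $\overline{\mathcal{P}} \subset \mathcal{L}$, say the closure of the leaf $\mathcal{P}\times\{\pi/2\}$, and similarly $\overline{\mathcal{P}}' \subset \mathcal{L}'$. Together with its boundary (the binding), this closed page is a compact Legendrian with non-empty boundary, since it is tangent to the characteristic foliation. By hypothesis the two pages are diffeomorphic and have the same formal Legendrian class, i.e.\ the same homotopy class and rotation class. The $h$-principle for Legendrians with boundary then gives a contact isotopy $\phi_t$ of $Y$ with $\phi_1(\overline{\mathcal{P}})=\overline{\mathcal{P}}'$. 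After this step we may assume the two pages coincide.

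The key step is to show that a half open book-type filling is determined, up to contact isotopy of its boundary, by a germ near one page. Using the diffeomorphism $\mathcal{L}\setminus\partial\mathcal{L}\cong \mathcal{P}\times(0,\pi)$, every leaf $\mathcal{P}\times\{s\}$ is a Legendrian page. The family $s\mapsto \overline{\mathcal{P}}\times\{s\}$ is therefore a Legendrian isotopy of manifolds with boundary that fixes the binding, provided it is suitably reparametrized near $s\to 0,\pi$. Contracting the interval $[\epsilon,\pi-\epsilon]$ toward $\pi/2$ via this isotopy, extended to an ambient contact isotopy, pushes all of $\mathcal{L}$ into an arbitrarily small standard neighborhood $J^1\overline{\mathcal{P}}$ of the middle page. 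Inside this jet neighborhood, $\Lambda$ is a Legendrian whose front is the union of the two sheets given by the pages $s=0,\pi$, joined along a cusp edge at the binding. I would then argue that it is Legendrian isotopic to the model of Theorem \ref{thm:existence}, whose front is $j^1(\pm F)$. Here one uses that the space of such two-sheeted fronts with a single cusp edge along $\partial\overline{\mathcal{P}}$ and no other singularities is contractible: interpolate $F_t=(1-t)F+tF'$, where both functions are positive in the interior and behave like $x^{3/2}$ at the boundary. The same model is obtained for $\Lambda'$, so $\Lambda$ and $\Lambda'$ are Legendrian isotopic, and Legendrian isotopies extend to contact isotopies.

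The main obstacle is controlling the behaviour near the binding. The pages meet there with singular characteristic foliation, so one must show that the shrinking procedure produces a genuine front with a standard cusp edge and no additional singularities, i.e.\ with the $x^{3/2}$ profile. I would first try to run the local model of Figure \ref{fig:def_sF} in reverse: near the binding, deform the ray foliation back to the tangent family of graphs $\pm s x^{3/2}$, in a neighborhood where the contact form is standard. An alternative route is available if this is awkward. By Proposition \ref{prop: pre-lag} and Theorem \ref{thm:existence}, both Legendrians bound regular flexible exact Lagrangian fillings (for $n>1$). Since the two fillings have the same formal data, Theorem 4.2 of \cite{EliashbergGanatra} would then identify them, and hence their Legendrian boundaries.
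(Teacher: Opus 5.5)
Your main route is the paper's strategy. You use the $h$-principle for Legendrians with boundary \cite{CieliebakEliashbergMishachev} to make the middle pages of the two half open books coincide, then collapse each half open book onto that page, and the binding is the only delicate place. The paper collapses through the cornered Legendrians $\overline{\mathcal{P}}_t\cup\overline{\mathcal{P}}_{\pi-t}$ and smooths them near the binding. Your reversal of Figure~\ref{fig:def_sF} is one way of carrying out that smoothing. It is also exactly what your phrase ``extend to an ambient contact isotopy'' still owes: the union of the two moving pages has a corner along the binding, so standard isotopy extension does not apply to it directly.

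Where you differ is the endgame. The paper first arranges, with ``a bit more work'', that the two pre-Lagrangians agree near the middle page, so both collapses end at the same Legendrian. You instead collapse each filling separately into $J^1\overline{\mathcal{P}}$ and connect the result to the model front $j^1(\pm F)$ of Theorem~\ref{thm:existence} by linear interpolation. This makes the paper's unproved germ-matching step concrete. It also identifies $\Lambda$ directly with the Legendrian whose filling Theorem~\ref{thm:existence} constructs.

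The interpolation step needs one more argument. Two-sheeted fronts with a single cusp edge along $\partial\overline{\mathcal{P}}$ and no other cusps do not form a contractible family, because the sheets may cross. For $n=1$, a front with two cusps and one crossing is a stabilized unknot. So you must show the collapsed front is $\{z=F_\pm\}$ with $F_+>F_-$ in the interior, and this comes from the half open book structure:
\begin{itemize}
\item Write the pages near the middle one as $j^1f_s$, with $f_{\pi/2}=0$. Then $\alpha|_{T\mathcal{L}}=\partial_sf_s\,ds$.
\item Non-singularity of the characteristic foliation in the interior therefore gives $\partial_sf_s\neq 0$, so the sheets from $s<\pi/2$ and $s>\pi/2$ lie strictly on opposite sides: $F_-<0<F_+$ away from the cusp edge.
\item Near a standard cusp edge $(F_++F_-)/2$ is smooth, and subtracting its $1$-jet is a Legendrian isotopy.
\item The result is $j^1(\pm F)$ with $F>0$ inside and $F=x^{3/2}k$, $k>0$, near the boundary. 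Such functions form a convex set, so linear interpolation is legitimate.
\end{itemize}

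Your fallback through \cite{EliashbergGanatra} is circular. Theorem~\ref{thm:existence} provides a regular flexible filling only for the Legendrian it builds inside $J^1\overline{\mathcal{P}}$. Applying it to an arbitrary $\Lambda$ bounding a half open book therefore presupposes the very isotopy to the model that you are proving. Propositions~\ref{prop: pre-lag} and~\ref{prop:lagfilling} give only an exact filling, with no control on regularity or flexibility.
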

\begin{proof}
    The construction of Legendrian isotopy follows a similar strategy as the one in \cite{CE}.  The  crucial step is showing that the contact isotopy class of an exact pre-Lagrangian filling of half open book-type only depends on the Legendrian isotopy class of the Legendrian with boundary given by the  closure of a leaf $\overline{\mathcal{P}}$ of the characteristic foliation, i.e.~a page of the half open book. 
    
   When two pre-Lagrangian fillings of half open book-type  have the same classical invariants, then the $h$-principle for Legendrians with boundary implies that there is a contact isotopy that takes one page to the other, \cite{CieliebakEliashbergMishachev}. With a bit more work we can even ensure that two pre-Lagrangians coincide near one of the pages, say $\frac{\pi}{2}$. There is a family $\overline{\mathcal{P}}_{\pi-t}\cup \overline{\mathcal{P}}_{t}$, where $t\in [0,\frac{\pi}{2}+\epsilon]$, of singular Legendrians starting with two boundaries of two different open books  and both ending at the union $\overline{\mathcal{P}}_{\frac{\pi}{2}-\epsilon}\cup \overline{\mathcal{P}}_{\frac{\pi}{2}+\epsilon}$. Note that, for $t>0$ the corresponding Legendrian in the filling has a singularity near the binding since the two tangent planes make a sharp angle there. However, we can smoothen these two Legendrians near the binding to create the isotopy we want.
\end{proof}

\section{Legendrian double}\label{doubling process}
Here, we provide the doubling construction. Our construction starts from a contact manifold $(Y,\xi)$  with  compatible open book $({P},\eta,\phi)$ where the Liouville domain $({P},\eta)$ is the page of the open book and the map $\phi$ is the symplectomorphism of the page ${P}$ and also we have an exact Lagrangian filling $L$ of the Legendrian submanifold $\Lambda$ in the page ${P}$ of the open book, i.e. $\eta|_{(TL)}=dF$  for some real valued function $F$ of $L$. We assume that $L$ is cylindrical over a Legendrian $\Lambda$ near the boundary of $(P,\eta)$, which means that $F$ can be taken to vanish near the boundary. We obtain a hypersurface of $(Y,\xi)$ by forming the union of the pages ${P}_0$ and ${P}_\pi$ and  identifying them along their boundaries $\partial {P}_0=\partial {P}_\pi$, that is, along binding. This union is a hypersurface that we denote by ${P}^d$. The Legendrian double will be constructed inside this hypersurface below.
\begin{proposition}
    The hypersurface ${P}^d$ is convex hypersurface of $(Y,\xi)$ with dividing set $\partial {P}$.
\end{proposition}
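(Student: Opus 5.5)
To prove that $P^d = P_0 \cup_{\partial P} P_\pi$ is convex with dividing set $\partial P$, I will write down an explicit contact vector field transverse to $P^d$. Then I will compute the set where it is tangent to $\xi$.

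\emph{Away from the binding.} The natural candidate is the rotation field $\partial_\theta$ of the open book.
\begin{itemize}
\item By Lemma \ref{lemma_def_ob} we may assume the contact form is $\eta + d\theta$ near $P_0$ and $P_\pi$, that is, $G=0$ and $g=1$ there. This holds after isotoping the contact structure so that the interpolation region of the mapping cylinder lies away from $\theta = 0,\pi$.
\item On the mapping cylinder the form is then $\theta$-invariant near the two pages, so $\partial_\theta$ is a strict contact vector field there.
\item $\partial_\theta$ is transverse to both pages.
\item $\alpha(\partial_\theta) = g = 1 \neq 0$, so there are no dividing points in the interior of the pages.
\end{itemize}

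\emph{Near the binding.} Here the form is $e^{f(r)}\alpha_\eta + g(r)\,d\theta$ on $\partial P\times D^2_{3\epsilon}$. The hypersurface $P^d$ meets this region in $\partial P \times \{\text{a line through the origin}\}$, namely $\theta\in\{0,\pi\}$. So $\partial_\theta$ degenerates exactly on the part of $P^d$ we need to control, and it must be replaced by a field transverse to the line. I will take the linear field $\partial_y$ on $D^2$ (with $y = r\sin\theta$), corrected by a term in the Reeb direction of $\alpha_\eta$ to make it contact.

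\emph{Matching the two regions.} Rather than building the corrected field by hand, I will generate a global contact vector field $X_h$ from a contact Hamiltonian $h = \alpha(X)$, chosen as follows.
\begin{itemize}
\item For $r \ge 2\epsilon$, take $h = g(r) = 1$ on $\{\theta = 0\}$ and $h = -1$ on $\{\theta = \pi\}$, using a function of $\theta$ such as $h = \sin\theta$.
\item Near the binding, take $h = r\sin\theta \cdot(\text{positive factor})$ or a similar function. Its zero set near $P^d$ is exactly $\partial P \times \{0\}$, and the resulting field should be transverse to $P^d$ there.
\end{itemize}
Since any smooth $h$ generates a contact vector field, the only real content is transversality. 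This is a local computation using the formula for the Reeb field in Lemma \ref{lemma_contact_form}, together with $f'\le 0$ and $g'\ge 0$, whose signs guarantee a nonvanishing $\partial_y$-component along $\theta\in\{0,\pi\}$.

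\emph{The dividing set.} Given transversality, the dividing set is $\{h=0\}\cap P^d$. In the interior, $h = \pm g$ on the two pages (positive on $P_0$, negative on $P_\pi$), so it is nonzero there. This leaves exactly the binding $\partial P$, which is the claim.

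\emph{Expected obstacle.} The main difficulty is verifying transversality of $X_h$ in the transition annulus $\epsilon\le r\le 2\epsilon$. There $f$ and $g$ are only constrained by monotonicity, so the $\partial_r$ and $\partial_\theta$ components of $X_h$ must be checked carefully. My first attempt will be to choose $h = \rho(r)\sin\theta$ with $\rho$ tailored to $g$, for instance $\rho = g/r$ suitably smoothed near $r=0$.
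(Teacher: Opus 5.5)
Your strategy is the paper's: a contact Hamiltonian that is $+1$ near $P_0$, $-1$ near $P_\pi$, and vanishes on $P^d$ only along the binding. Then $X_h=\pm\mathcal{R}_\alpha$ on the pages, and $X_h$ pushes in the $y$-direction at the binding. The paper takes $H=h(x)$ near the binding, with $h$ odd and nondecreasing. \textbf{The Hamiltonians you actually write down do not do this, and this is a genuine gap.} Since $P^d=\{\theta\in\{0,\pi\}\}$, the functions $\sin\theta$, $r\sin\theta\cdot(\text{positive})$ and $\rho(r)\sin\theta$ all vanish identically on $P^d$. This contradicts your own requirement ($h=\pm1$, resp.\ $\pm g$, on the two pages) and your claim that the zero set on $P^d$ is $\partial P\times\{0\}$. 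It also destroys transversality. If $h|_{P^d}\equiv 0$, then $X_h\in\xi$ along $P^d$, and $\iota_{X_h}d\alpha=-dh$ on $\xi$ vanishes on $\xi\cap TP^d$. So $X_h$ lies in the characteristic line of $P^d$ (or vanishes), hence is tangent to $P^d$. Concretely, for $h=y$ on $\{r\le\epsilon\}$ one computes $X_h=-(1+x^2/2)^{-1}\partial_x$ along $\{y=0\}$. The slip comes from confusing a vector field with its Hamiltonian: $\alpha(\partial_y)=g(r)\,d\theta(\partial_y)=g(r)\cos\theta/r$, which is a positive multiple of $x=r\cos\theta$, not of $y$. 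You need $h=\tilde\rho(r)\cos\theta$ near the binding, or a function of $x$ as in the paper.

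Even with $\cos\theta$, the radial profile matters; this is exactly your ``expected obstacle''. For $h=\tilde\rho(r)\cos\theta$, the $\partial_\theta$-component of $X_h$ along $\theta=0$ is $(\tilde\rho'-\tilde\rho f')/(g'-f'g)$, and along $\theta=\pi$ it is the negative of this. The denominator is positive by the contact condition, so you need $\tilde\rho'-\tilde\rho f'>0$. This holds if $\tilde\rho$ is strictly increasing on $(0,2\epsilon)$, equals $r/2$ near $0$, and equals $1$ for $r\ge 2\epsilon$, where $f'=-1$. At $r=0$ itself one gets $X_h=\tfrac12\partial_y$. Your candidate $\tilde\rho=g/r$ fails: the numerator is positive near $r=0$ but equals $(r-1)/r^2<0$ for $r\ge 2\epsilon$, so $X_h$ becomes tangent to $P^d$ somewhere in between. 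With a corrected $h$, your final step (dividing set $=\{h=0\}\cap P^d=\partial P$) goes through.
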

\begin{proof}
 We  define a contact Hamiltonian function that is equal to a positive constant in a neighborhood of one page, $P_0$, and to the negative of the same constant in a neighborhood of the opposite page, $P_\pi$. In the binding region, the function interpolates smoothly between these values as a function of $x$. More precisely, in the neighborhood $(D^2_{3\epsilon}\times \partial P, e^{f(r)}\alpha_\eta+g(r)d\theta) $ we consider the autonomous contact Hamiltonian $H(x,y,p)=h(x)$ where 
\begin{itemize}
    \item $h'(x)\geq 0$,
    \item $h(-x)=-h(x)$,
    \item $h(x)=1$ for $x<\epsilon$,
    \item $h(x)=-1$ for $x>\epsilon$.
\end{itemize}

The contact vector field can be seen to be given by $$
H\mathcal{R}_\alpha+\left(-h'\frac{g}{e^f(g'+f'g)}\right)\left({\mathcal{R}}_{{\alpha}_{\eta}}-\frac{e^f}{g} \partial_\theta \right).$$

The associated contact Hamiltonian vector field is the unique contact vector field generated by this Hamiltonian, whose flow preserves the contact structure. In the interior of the pages, where $h'=0$, this vector field coincides with non-zero multiple of  the Reeb vector field. By Lemma \ref{lemma_contact_form}, this vector field transverse to the pages. In the binding region, it pushes the hypersurface $P^d$ in the $y$-direction.

It is immediate from the definition that the dividing set is precisely the binding $\partial P$.
\end{proof}

Our aim is to construct the double $L^d$ of the exact Lagrangian filling $ L$. For this purpose,  we form the union of the Lagrangians corresponding to  $ L$  in the two opposite  pages  ${P}_0$ and ${P}_\pi$ along their Legendrian boundary  $\Lambda$. Hence, this is a closed half-dimensional submanifold of the convex hypersurface $P^d$ that intersects the dividing set in a Legendrian. There are several ways to deform $L^d$ away from the dividing set to make it Legendrian everywhere. We will now proceed to one such deformation.

Note that, since $L\subset (P,\eta)$ is an exact Lagrangian filling, by definition it is cylindrical and thus strongly exact near the binding. The reason is that $L$ has a collar that is cylindrical over a Legendrian, then the double $L^d$ is smooth and Legendrian near the binding. Since  $\eta|_{(TL^d)}=dF$ for $F\colon L\to \R$ supported away from the boundary, the double $L^d$ created in this manner is exact Lagrangian in the union of two pages, but it is not necessarily Legendrian in the interior of the pages unless the Lagrangian filling $L\subset (P,\eta)$ is \emph{strongly exact}, i.e. unless $\eta$ pulled back to $L$ \underline{vanishes}.  In Lemma \ref{lemma_def_ob}, we fix this. After deforming the contact form in the interior of the pages as described in Lemma~\ref{lemma_def_ob} (this does not change the isotopy class of the contact structure), we may assume that the contact form pulls back to $\eta-dF$ on both pages $L_0\subset P_0$ and $L_\pi\subset P_\pi$.  Since \(L\) is strongly exact for the new Liouville form $\eta-dF$, the doubling construction now produces a Legendrian submanifold $L^d_{Leg}$ contained in two opposite pages of the open book decomposition.
\begin{definition}\label{definition:Leg_double}
    Assume that $(Y,\xi)$ is a contact manifold with a compatible open book with page $(P,\eta)$ that contains an exact Lagrangian filling $L$. The submanifold $L^d_{Leg}\subset P^d \subset (Y,\xi)$  which is diffeomorphic to $L \cup_{\partial L} L$ and contained in the union $P^d$ of the pages of the deformation of the open book described above is a Legendrian that is called the \emph{Legendrian double} of the exact Lagrangian filling $L\subset (P,\eta)$.
\end{definition}
\begin{remark}
Alternatively, when the open book is trivial as in Example \ref{ex:trivial_ob}, i.e. all pages have the same Liouville form $\eta$, then we can change the Liouville form by simply deforming the pages in the Reeb direction as explained in the same example.
\end{remark}

In the following we assume that  $L$ is strongly exact or, alternatively, that the Liouville form on the pages is deformed, as in Lemma \ref{lemma_def_ob} to make it Legendrian contained inside a different open book. Consider the following submanifold in the contact manifold $(Y,\xi)$ $$ \mathcal{L}=\bigcup_{\theta \in [0,\pi]} {L_\theta}$$
 where $L_\theta$ is an exact Lagrangian filling of the Legendrian submanifold $\Lambda$ contained in the corresponding page $P_\theta$.
\begin{lemma}\label{lemma:pre_lag}
   After suitable perturbation of $L$, the submanifold $\mathcal{L}$ admits a characteristic foliation of half open book-type. 
\end{lemma}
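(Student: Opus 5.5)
Our plan is to verify the two conditions of Definition~\ref{def:halfbook} directly for $\mathcal{L}=\bigcup_{\theta\in[0,\pi]}L_\theta$. We work in the deformed open book of Lemma~\ref{lemma_def_ob}, so each $L_\theta\subset P_\theta$ is strongly exact. The perturbation of $L$ we allow is to make $L$ cylindrical over $\Lambda$ on a whole collar $[-3\epsilon,0]\times\partial P$. After this change, near the binding $L$ coincides with $(f(r),\Lambda)$ in the coordinates of the binding neighborhood. Globally, $\mathcal{L}$ is then the union of $\Lambda\times D^2_{3\epsilon,+}$, where $D^2_+$ is the half-disk $\theta\in[0,\pi]$, with $L_{<-2\epsilon}\times[0,\pi]$, glued along the collar. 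Hence $\mathcal{L}$ is a smooth $(n+1)$-manifold with boundary $L_0\cup_\Lambda L_\pi=L^d_{Leg}$.

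\emph{Step 1: the interior, away from the binding.} On $L_{<-2\epsilon}\times[0,\pi]$ the contact form restricts to $(\eta+dH+dG)|_{TL}+g\,d\theta$. By strong exactness for the deformed Liouville form, the $L$-part vanishes. Since $g>0$, we get $\alpha|_{T\mathcal{L}}=g\,d\theta$. Strictly, the auxiliary functions $G,g$ may depend on $\theta$, so we choose $\theta_0$ in Lemma~\ref{lemma_def_ob} outside $[0,\pi]$, where $G=0$ and $g=1$. The characteristic foliation $\ker\alpha|_{T\mathcal{L}}$ is then given by $\{d\theta=0\}$, with leaves the Legendrian copies $L_\theta$. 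It is non-singular here and tangent to the boundary pages $L_0$ and $L_\pi$.

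\emph{Step 2: near the binding.} On $\Lambda\times D^2_{3\epsilon,+}$ we have $\alpha|_{T\mathcal{L}}=e^{f(r)}\alpha_\eta|_{T\Lambda}+g(r)\,d\theta=g(r)\,d\theta$, because $\Lambda$ is Legendrian in $\partial P$. This form vanishes exactly along $r=0$, which is $\Lambda$ itself and lies in $\partial\mathcal{L}$. For $r>0$ its kernel is $T\Lambda\oplus\langle\partial_r\rangle$, whose leaves are the rays $\{\theta=\text{const}\}$ times $\Lambda$. These match the leaves $L_\theta$ of Step 1 on the overlap. So the foliation is non-singular in the interior of $\mathcal{L}$, and it is tangent to the boundary with singular locus $\Lambda$.

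\emph{Step 3: the doubling condition.} Reflecting $\theta\mapsto 2\pi-\theta$ doubles $\mathcal{L}$ along $L_0\cup L_\pi$ to $\Lambda\times D^2\cup L\times S^1$. This is the open book with binding $\Lambda$, page $L$ and trivial monodromy. The doubled foliation $\{d\theta=0\}$, or $g(r)\,d\theta$ near the binding, is exactly the singular foliation by its pages, and $\mathcal{L}$ is the half bounded by the pages $\theta=0,\pi$. This also yields the diffeomorphism $\mathcal{L}\setminus\partial\mathcal{L}\cong\mathcal{P}\times(0,\pi)$ with $\mathcal{P}=\operatorname{int}L$.

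The main obstacle is Step 2 together with the compatibility between the perturbation and the deformation. We must check that making $L$ cylindrical out to radius $3\epsilon$ keeps it an exact filling that is isotopic to the original, which holds since $L$ is already cylindrical on a smaller collar and we may rescale by the Liouville flow. We must also check that the deformation of Lemma~\ref{lemma_def_ob}, with $H=-F$ supported away from the collar, does not alter the binding model. This holds because that lemma fixes a neighborhood of the binding.
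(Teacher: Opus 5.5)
Your proposal is correct and takes essentially the same approach as the paper: both verify the two conditions of Definition~\ref{def:halfbook} directly for $\mathcal{L}=\bigcup_{\theta\in[0,\pi]}L_\theta$. These are the non-singular interior foliation tangent to the boundary, and the doubling to an open book with page $L$. Your explicit computation of $\alpha|_{T\mathcal{L}}$ as $g(r)\,d\theta$ near the binding and $d\theta$ on the pages (with $\theta_0\notin[0,\pi]$ and $L$ cylindrical on the whole collar) supplies details the paper's terse argument leaves implicit.
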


\begin{proof}
  Indeed, the submanifold $\mathcal{L}$ has a characteristic foliation given by $ker(\alpha|_{T\mathcal{L}})$. From this foliation, we obtain the following observations.
\begin{itemize}
    \item The characteristic foliation is tangent to the boundary $L_{Leg}^d$ of $\mathcal{L}$ since $ker(\alpha|_{T\mathcal{L}})=T\mathcal{L}$ on the boundary and non-singular in the interior.
    \item If $\mathcal{L}$ is doubled along its boundary, we obtain an open book  with pages $L$.
\end{itemize}

In particular, we have the diffeomorphism 
$$\mathcal{L} \setminus \partial \mathcal{L} \cong \mathring L \times (0,\pi)$$
of the interior and each $L\times \{\theta\}$ is a Legendrian leaf of the foliation.

The above observation shows that $\mathcal{L}$ has a characteristic foliation of half open book-type. Then, by Proposition \ref{prop: pre-lag}, it is an exact pre-Lagrangian filling of the Legendrian $L_{Leg}^d$. Moreover, $\mathcal{L}$ has a smooth lift to the half symplectization which can be perturbed to an embedded exact Lagrangian in $(-\infty,0]\times Y$ with cylindrical ends by Proposition \ref{prop:lagfilling}.
 \end{proof} 
    
We are now ready to prove Theorem \ref{theorem:flexible_filling}.

\begin{proof}[Proof of Theorem \ref{theorem:flexible_filling}]
Theorem \ref{thm:existence} establishes the desired result, as Lemma \ref{lemma:pre_lag} provides the exact pre-Lagrangian filling   $\mathcal{L}$ bounded by $L_{Leg}^d$.
\end{proof}

\begin{proof}[Proof of Theorem \ref{thm:flex_trivial_fib}]

We start by showing that the Legendrian ideal boundary at infinity of the complete Lagrangian filling $\hat{L} \times \R $ in $ \hat{P} \times \C$ is equal to the Legendrian double of $L \subset (P,d\eta)$. 
More precisely, since $\hat{L} \times \R $ is tangent to the Liouville flow outside of a compact subset, it has a well-defined ideal boundary at infinity $\partial_\infty(\hat{L} \times \R) \subset \partial_\infty (\hat{P}\times \C)$. If we identify $\partial_\infty (\hat{P}\times \C)$ with the contact-type hypersurface
$$\{g(p)+\tfrac{|z|^2}{2}=R\} \subset \hat{P}\times \C$$
described in Example \ref{ex:induced_ob}, we can identify this ideal boundary with the Legendrian submanifold given by the intersection
$$
\partial_\infty(\hat{L} \times \R) 
= \left\{ (x,z) \in \hat{L} \times \R \;\middle|\; g(x) + \tfrac{|z|^2}{2} = R \right\}
\subset \partial_\infty(\hat{P} \times \C)
$$
of $\hat{L} \times \R$ and the hypersurface. We claim that this Legendrian is equal to $L_{Leg}^d$. Indeed, the Legendrian is clearly contained inside two opposite pages of the compatible open book with trivial monodromy described in Example \ref{ex:induced_ob}.

Pictorially, this Legendrian arises from two types of ends. When $x$ approaches infinity in $\hat{L}$, the condition forces $z \to 0$, recovering a copy of $\partial_\infty (\hat{L})$ sitting in the binding. On the other hand, when $z \to \pm \infty$, the function $g(x)$ must remain bounded, so $x$ stays in a compact region of $L$, producing two ends corresponding to $z \to +\infty$ and $z \to -\infty$. Thus, $\partial_\infty(\hat{L} \times \mathbb{R})$ interpolates between $\partial_\infty (\hat{L})$ and two copies of $L$, and defines a Legendrian submanifold, which is, by definition, $L_{Leg}^d$.

We proceed to show that the filling is flexible. Start by considering a Weinstein neighborhood of $(\hat{L}\times\R)$ in $\hat{P}\times \C$ identified with
$$
T^*(\hat{L}\times\R)\cong T^*\hat{L}\times T^*\mathbb{R},
$$
under which the Lagrangian $\hat{L}\times \R$ corresponds to the zero section
$$
0_{\hat{L}}\times 0_{\R} \subset {{T^*}\hat{L}}\times T^*\R.
$$
The above symplectic manifold is actually a Weinstein sector and not a Weinstein domain, but we will be make the appropriate identifications with a Liouville domain in the following.

Note that $\hat{L}\times \R$ is a regular Lagrangian submanifold of $T^*(\hat{L}\times\R)$. Since the cotangent bundle of a connected manifold with boundary is subcritical, the complementary cobordism is trivial. Consequently, the regular Lagrangian $\hat{L}\times \R$ is flexible when considered inside the cotangent bundle $T^*(\hat{L}\times\R)$. 

Next we claim that this flexible Lagrangian filling is decomposable. Since its boundary is the Legendrian double, Theorem~\ref{thm:existence} implies that the boundary, i.e.~the Legendrian double, admits a regular flexible exact Lagrangian filling obtained by a sequence of standard Lagrangian handle attachments. By the $h$-principle \cite[Theorem 4.2]{EliashbergGanatra}, there is a symplectomorphism $$
\Phi\colon T^*(\hat{L}\times\mathbb{R})
\longrightarrow
T^*(\hat{L}\times\mathbb{R})$$
which identifies the latter filling with $\hat{L}\times \R$. 

The above symplectomorphism $\Phi$ can be assumed to be cylindrical outside of a compact subset, which means that it is of the form $(t,y) \mapsto (t+h(y),\phi(y))$ in the symplectization coordinates for a contactomorphism
$\phi \in \mathrm{Cont}(Y,\xi)$ satisfying $\phi^*\alpha=e^{-h}\alpha$.
Denoted by
\(\mathcal{L}_{\mathrm{dec}}\) the aforementioned decomposable Lagrangian filling of the Legendrian
\(\partial_\infty(\hat{L}\times\mathbb{R})\), which by construction is obtained using only
Lagrangian handles of index \(<n\). Since, by the construction in Theorem \ref{thm:existence}, the Lagrangian filling can be constructed in the symplectization of a page of the pre-Lagrangian of half open book-type, we can find a Hamiltonian isotopy that places this decomposable filling inside an arbitrarily small neighborhood of $\hat{L} \times (0,+\infty)$, i.e.~the symplectization of the standard contact jet-neighborhood of the half $L \subset L^d_{Leg}$ of the Legendrian double. We will still denote this filling by \(\mathcal{L}_{\mathrm{dec}}\).

For \(t\geq 0\), we let \(\mathcal{L}_{\mathrm{dec}}^t\) denote the translation of \(\mathcal{L}_{\mathrm{dec}}\) by $t$ in the symplectization coordinate of the cylindrical end. Then
$$
\mathcal{L}_t
:=
\Phi^{-1}\bigl(\mathcal{L}_{\mathrm{dec}}^t\bigr)
$$
defines an exact Lagrangian isotopy starting with
\(\mathcal{L}_0=\hat{L}\times\mathbb{R}\) and ending with the decomposable Lagrangian filling
\(\mathcal{L}_T=\Phi^{-1}(\mathcal{L}_{\mathrm{dec}}^T)\) whenever $t=T\gg0$ is sufficiently large (so that $\mathcal{L}_{\mathrm{dec}}^T$ is contained in the region where the symplectomorphism is cylindrical). Here we have used the fact that cylindrical symplectomorphisms preserve the property of being decomposable.

Note that this Lagrangian isotopy fixes the Lagrangian set-wise outside of a compact subset. Thus, a standard fact implies that there is a compactly supported Hamiltonian isotopy from $\hat{L} \times \R$ to the latter decomposable filling with only standard handles of index $<n.$ In addition, the decomposable filling $\mathcal{L}_T$ lives inside the symplectization of an arbitrarily small standard contact jet-neighborhood of the half $L \subset L^d_{Leg}$ of the Legendrian double. 

Since the the complementary cobordism $P \times \C \setminus T^*(\hat{L} \times \R)$ is trivial in some neighborhood of $\partial_\infty(\hat{L}\times \R)$ the cobordism $\mathcal{L}_T$ still lives inside the symplectization also after adding the complementary cobordism. In particular, it remains decomposable with only handles of index $<n$, and thus flexible regular, as sought. 
\end{proof}

\section{Pages in suspended open books from Legendrian doubles} \label{section_symp_ribbon}

The suspension construction is a way to modify the topology of a page of an open book decomposition to yield a new open book of the same contact manifold. Finding an explicit description of the page of the new open book can be conveniently done using the Legendrian double construction, as we show in Theorem \ref{prop_stab_ob} below. To that end, we have to start by giving an explicit description of symplectic ribbons of Legendrian doubles.

A symplectic ribbon of a Legendrian submanifold  is a hypersurface of the contact manifold $(Y,\xi)$ that contains the Legendrian  and on which the contact form $\alpha$ restricts to a Liouville form. By the standard Legendrian neighborhood theorem there exist many symplectic ribbons that are isomorphic to the disk cotangent bundle of the Legendrian. Here we give an explicit description of the symplectic ribbon  $\mathcal{R}_L$ for the Legendrian double $L_{Leg}^d$.

\begin{lemma} \label{lemma_sympribbon}
    The symplectic ribbon $\mathcal{R}_L$ of the Legendrian double $L_{Leg}^d$ can be taken to be a subset of the pages outside of a neighborhood of the binding while near the binding it is consisting of the following embedding $$\tilde{\phi}: I\times U\to D^2\times \partial {P}$$
$$(t,p)\mapsto \left((t,\rho(t,z)),\varphi(\sigma(t,z),q)\right)$$ where $\varphi:(U,dz-\lambda)\to (\partial {P},\alpha)$ is the embedding of the standard neighborhood $U$ of $\Lambda$ which is given by a neighborhood of the zero section of the 1-jet space $J^1(\Lambda)$. Here $I\times U$ is identified with $ I\times I\times D^*\Lambda$ since $U\subset J^1(\Lambda)$ and the point $p=(z,q)\in I\times D^*\Lambda$. The maps $\rho$ and $\sigma$ have the following properties:
\begin{itemize}
    \item $\rho(t,z)=zh(t)$ for a non-negative bump function $h$ with the support on the closed interval $I$,
\item $\sigma(t,z)=zk(t)$ for the function $k$ which  takes the values

    $k(t)=\begin{cases}
     1& \text{for  } t<0,\\
      0& \text{for  } t=0,\\
      -1& \text{for  } t>0.
\end{cases}$
    \end{itemize}
\end{lemma}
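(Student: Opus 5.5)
The plan is to build $\mathcal{R}_L$ in two pieces and glue them. Away from the binding, $L^d_{Leg}$ is the union of the two copies $L_0\subset P_0$ and $L_\pi\subset P_\pi$, and (after the deformation of Lemma \ref{lemma_def_ob}) the contact form pulls back to each page as a Liouville form for which $L$ is strongly exact. Each page is therefore already a hypersurface containing the Legendrian on which $\alpha$ restricts to a Liouville form. We take as ribbon there a Weinstein neighborhood $D^*L\subset P_\theta$ for $\theta=0,\pi$, which is a disk cotangent bundle and hence a symplectic ribbon. The real work is near the binding, in the chart $(D^2_{3\epsilon}\times\partial P,\, e^{f(r)}\alpha_\eta+g(r)d\theta)$. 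There the two halves of $L^d_{Leg}$ meet along $\Lambda\subset\partial P$ and form the cylinder $\{y=0\}\times\Lambda$ in Cartesian coordinates $(x,y)$ on $D^2$, with $x=t$ running from page $\theta=\pi$ to page $\theta=0$.

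Near the binding we write down $\tilde\phi$ explicitly and check three things.

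First, $\tilde\phi$ is an embedding. This holds because $t$ is recovered from the first coordinate and $(z,q)$ from $\varphi$ together with $\rho$, provided $h>0$ wherever $k=0$. This fixes the required interplay between $h$ and $k$.

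Second, $\tilde\phi$ restricts on the zero section $\{z=0,\,q\in 0_\Lambda\}$ to the cylinder $I\times\Lambda$. This is immediate, since $\rho=\sigma=0$ there.

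Third, the pullback $\tilde\phi^*\alpha$ is a Liouville form on $I\times U$. We expand it as
\[
e^{f(r)}\bigl(k(t)\,dz+z k'(t)\,dt-\lambda\bigr)+g(r)\,d\theta ,
\]
where $r^2=t^2+z^2h(t)^2$ and $d\theta$ is pulled back along $(t,zh(t))$. We then compute $d(\tilde\phi^*\alpha)^{n}$, which is of top degree on the $2n$-dimensional domain $I\times I\times D^*\Lambda$. Since $k$ is written as a step function, I would first replace it by a smooth odd function, decreasing through $0$ at $t=0$, and read the stated values as the behavior for $|t|$ bounded away from zero.

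For $|t|$ away from $0$, $\rho$ vanishes outside the support of $h$, so the image lies in the pages. The form reduces to $\pm e^{f}dz-e^f\lambda$ plus a $d\theta$ term; matched with the page collar coordinate via $s=e^{f(r)}$, this is the standard Liouville structure on $J^1\Lambda\times\R$ sitting inside the cylindrical end of the page. It therefore agrees with the Weinstein neighborhood chosen away from the binding, which gives the gluing. The sign flip of $k$ records exactly that the coorientation of $\partial P$ induced from $P_0$ is opposite to the one induced from $P_\pi$.

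The main obstacle is nondegeneracy at $t=0$, where $k$ vanishes and the $dz$ contribution disappears. There the missing direction has to come from $d(g(r)\,d\theta)$ together with the $z k'(t)\,dt$ term. Using $g=r^2/2$ for $r\le\epsilon$, one has $g\,d\theta=\tfrac12(x\,dy-y\,dx)$, which pulls back to
\[
\tfrac12\, t\,d\bigl(zh(t)\bigr)-\tfrac12\, zh(t)\,dt .
\]
Its differential is $h(t)\,dt\wedge dz$ plus terms proportional to $z$. At $t=0$ we have $k'(0)<0$ and $h(0)>0$, so the $dt\wedge dz$ coefficient of $d(\tilde\phi^*\alpha)$ is $e^{f}(-k'(0))+h(0)>0$ when $z=0$. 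Combined with $-e^f d\lambda$ on the $D^*\Lambda$ factor, this gives a nonvanishing volume form on a small enough neighborhood of the zero section. I would verify the sign of this coefficient for all $t$ by shrinking $U$, so that $z$ is small, and then choose $h,k$ so that the sum of the two contributions stays positive throughout. Finally, I would check that the Liouville vector field points outward along the boundary of the shrunk domain, using the standard radial structure in the $D^*\Lambda$ and $z$ directions.
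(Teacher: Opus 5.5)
This is essentially the paper's argument. Away from the binding you use Weinstein neighborhoods of $L$ in the two pages, and near the binding you do a pullback computation for $\tilde\phi$ in which nondegeneracy reduces to positivity of the $dt\wedge dz$ coefficient, with $h(0)>0$ supplying the direction lost where $k$ vanishes. Your smoothing of $k$ and your use of $r^2=t^2+z^2h(t)^2$ are more careful than the paper's computation.

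One correction to that computation. Since $d(e^{f}d\sigma)=d(e^{f})\wedge d\sigma$, the $k'$ terms coming from $k\,dz$ and from $zk'\,dt$ cancel. So the coefficient at the origin is just $h(0)$, not $h(0)-e^{f}k'(0)$. Along $z=0$ with $t\neq0$, the positive contribution is $e^{f}f'(|t|)\,\mathrm{sign}(t)\,k(t)\ge 0$, which comes from $d(e^{f})$; this is the paper's $e^{f}f'\sigma_z$ term. Positivity still holds, so your conclusion stands.

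Your final outward-pointing check is not needed for the paper's notion of ribbon, and it fails as stated. In the collar, the Liouville field of $s(\pm dz-\lambda)$ is $s\partial_s$, which is tangent to the faces $|z|=c$ and $|p|=c$.
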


\begin{proof}
    The standard Weinstein neighborhood of $L$ inside the page constitutes a symplectic ribbon except for the part contained near the binding, i.e. the  neighborhood of the  boundary $\Lambda$ of the Lagrangian submanifold $L$. The Legendrian submanifold $\Lambda$ has a standard neighborhood $U$ in the binding $\partial {P}$  which is given by a neighborhood of the zero section of the 1-jet space $J^1(\Lambda)$. The Weinstein neighborhood of $L$ can be assumed to have the following form near the binding (recall that the symplectization of the 1-jet space $J^1\Lambda$ is symplectomorphic to the cotangent bundle $T^*\R \times T^*\Lambda$, \cite{chantraine2025representations}) $$\phi: I\times U\to D^2\times \partial {P}$$
$$(t,p)\mapsto \left((t,0),\varphi(p)\right)$$
 where $\varphi:(U,dz-\lambda)\to (\partial {P},\alpha)$ is the embedding. The pull-back of the one-form of the neighborhood of the binding under the parametrization $\phi$ of the ribbon gives
$$\phi^*\left(e^{f(r)}\alpha+\dfrac{r^2}{2} d\theta\right)=e^{f(t)}(dz-\lambda).$$ Unfortunately, this is not a Liouville form, since the two-form given by its exterior differential is non-degenerate wherever $f'(t)=0$. We perturb the map $\phi$ as described in the statement of  Lemma \ref{lemma_sympribbon} to the map
$$\tilde{\phi}: I\times U\to D^2\times \partial {P}$$
$$(t,p)\mapsto \left((t,\rho(t,z)),\varphi(\sigma(t,z),q)\right).$$
 
The one-form defined on $D^2\times \partial {P}$ can be rewritten as $\dfrac{xdy-ydx}{2}+e^{f(r)}\alpha$. Pulling back the differential of this one-form along $\tilde{\phi}$ yields
    \vspace{1\baselineskip}
    
$\tilde{\phi}^\ast (dx\wedge dy+ e^{f(r)}d\alpha+ e^{f(r)}f^\prime(r)dr\wedge \alpha)$
$$=dt\wedge d(\rho(t,z))+ e^{f(t)}d(d(\sigma(t,z))-\lambda+e^{f(t)}f^\prime(t)dt\wedge (d(\sigma(t,z))-\lambda) $$
$$=(\rho_z(t,z)+ e^{f(t)}f^\prime(t)\sigma_z(t,z))dt\wedge dz-e^{f(t)}d\lambda-e^{f(t)}f^\prime(t)dt\wedge \lambda $$  which gives a positive symplectic two-form on the ribbon near binding, as sought.    
\end{proof}

After constructing a symplectic ribbon as in Lemma \ref{lemma_sympribbon}, one can  attach the twisted symplectic ribbon $\mathcal{R}_L$ to the page ${P}$ to yield a new Liouville hypersurface $P \cup \mathcal{R}_L$ of the contact manifold $(Y,\xi)$. In the case when $L$ is a disk, this construction re-produces the so-called \textit{stabilization of the page}; this is a standard construction that was described e.g.~in \cite{vanKoert2010}. We proceed to outline this construction from the point of view of abstract open books and contact manifolds. 

To stabilize an open book, we deform the page by attaching a Weinstein  $n$-handle to its $2n$-dimensional page along the Legendrian sphere 
in the binding that admits an exact Lagrangian disk filling. The contact manifold given by the new page with the monodromy consisting of the old monodromy composed with the Dehn twist around the sphere which is the new Lagrangian sphere in the new page. Note that the Lagrangian sphere is spanned by the Lagrangian disk and the core disk of the $n$-handle.  

In order to see that the new contact manifold defined by the deformed page with the deformed monodromy actually is contactomorphic to the original one, one can use canceling Weinstein surgeries in the following way.

Let $Y^{2n+1} = OB({P}, \eta, \psi)$ be a contact open book, and let $L$ be an $n$-dimensional Lagrangian disk  contained in the page $({P},\eta)$. To stabilize the open book, 
we begin by attaching a critical Weinstein $n$-handle to ${P}$ along $\partial L$, producing a 
new page ${P}_\Lambda$. The boundary $\Lambda$ corresponds to an isotropic sphere in the binding, so at the level of contact manifolds, stabilization starts 
with contact surgery along $\Lambda$.

From the viewpoint of symplectic cobordisms, we take a compact part $[0,1] \times Y$ and attach a subcritical Weinstein $n$-handle to $\{1\} \times Y$ along $\{1\} \times \Lambda$ using the framing of the symplectic normal bundle provided by the coordinates near the binding. Note that this produces a contact manifold that is topologically different from the one we started with. The next step in the construction of the abstract open book involves modifying the monodromy: in the new page ${P}_\Lambda$, we 
obtain a Lagrangian sphere $\tilde{L}$ by taking the union of the Lagrangian disk filling and the Lagrangian core disk obtained from the $n$-surgery. We may assume that $\tilde{L}$ is also a Legendrian sphere in $OB({P}_\Lambda, \tilde{\psi})$. The stabilized open book is then defined as
$$Y_\Lambda = OB({P}_\Lambda, \tilde{\eta}, \psi \circ \tau_{\tilde{L}})$$

\noindent where $\tau_{\tilde{L}}$ is the symplectic Dehn twist around $\tilde{L}$.

On the contact manifold level, this monodromy change corresponds to critical contact surgery along the Legendrian sphere $\tilde{L}$. This surgery, in turn, corresponds to an $n+1$-handle to the cobordism manifold along the Legendrian sphere $\tilde{L}$. The Legendrian sphere $\tilde{L}$ intersects the belt sphere of the previously attached $n$-handle at exactly one point resulting from the construction of stabilization. Thus, the stabilization procedure aligns perfectly with the handle cancellation and then we conclude that stabilization gives back the original contact manifold.

The stabilization procedure described above admits the following interpretation in terms of the page obtained by smoothing the union ${P}\cup \mathcal{R}_L$.

\begin{theorem}\label{prop_stab_ob}
    Let $L$ be a Lagrangian disk with Legendrian boundary in the page $(P,\eta)$ of an open book and denote by $\mathcal{R}_L$ the symplectic ribbon of its double. The smoothing of the union ${P}\cup \mathcal{R}_L $ that gives a Weinstein hypersurface obtained from $P$ by a Weinstein handle attachment along the Legendrian boundary of $L$ is a page of the open book obtained from the one above by the stabilization defined by the Lagrangian disk $L \subset P$. 
\end{theorem}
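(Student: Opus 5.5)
We need to show that the smoothed hypersurface $P\cup\mathcal{R}_L$ is a page of an open book supporting $(Y,\xi)$ whose monodromy is $\psi\circ\tau_{\tilde L}$, where $\tilde L$ is the Lagrangian sphere formed by $L$ and the core of the new handle. The plan has three steps.

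\textbf{Step 1: identify the hypersurface as $P_\Lambda$.} By Lemma \ref{lemma_sympribbon}, $\mathcal{R}_L$ coincides with a Weinstein neighbourhood of $L$ inside $P_0$, and of its copy inside the opposite page $P_\pi$, away from the binding. Near the binding it is the twisted strip $\tilde\phi(I\times U)$, which crosses $\partial P$ along a neighbourhood of $\Lambda=\partial L$. We therefore look at $P_0\cup\mathcal{R}_L$. This is $P_0$ with two pieces attached: a copy of $T^*_{\le\delta}L$ lying in $P_\pi$, and the strip near $\Lambda$. The strip glues the two along $J^1\Lambda$-neighbourhoods.

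Since $L\cong D^n$, the piece $T^*_{\le\delta}D^n$ attached along the strip over $\Lambda=S^{n-1}$ is exactly a Weinstein $n$-handle $D^n\times D^n$ attached along $\Lambda\subset\partial P$. The strip $I\times U$ is the collar along which the handle is glued. It carries the Liouville form computed in the proof of Lemma \ref{lemma_sympribbon}, and that form is positive there. After rounding corners (the smoothing in the statement) we get a Liouville domain deformation equivalent to $P_\Lambda$, with framing induced by the $J^1\Lambda$ coordinates. The core of the handle is the copy $L_\pi$, so the zero section of the ribbon, $L^d_{Leg}=L_0\cup L_\pi$, becomes the sphere $\tilde L$.

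\textbf{Step 2: exhibit the open book.} We push the family of hypersurfaces $P_\theta\cup\mathcal{R}_L^\theta$ around $S^1$. Here $\mathcal{R}_L^\theta$ is the ribbon of the double built from $L_\theta$ and $L_{\theta+\pi}$, obtained from $\mathcal{R}_L$ by the Reeb-transverse flow (Lemma \ref{lemma_contact_form}). Away from a neighbourhood $N$ of $\mathcal{L}\cup\mathcal{L}'$ (the two half-open-book pre-Lagrangians swept by $L_\theta$), these hypersurfaces are just the old pages. Inside $N$ they are modelled on a neighbourhood of the standard Legendrian unknot in $J^1S^n$, namely the boundary of $T^*D^n\times\C$. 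There we compare with the model stabilization of the trivial open book of $S^{2n+1}$ with page $D^{2n}$. That model has page $T^*S^n$ and monodromy the Dehn twist, and the stabilized page is the Milnor fibre $\{z_1^2+\dots+z_{n+1}^2=\epsilon\}$. We identify it with the smoothing of $D^{2n}\cup\mathcal{R}_{D^n}$ using Theorem \ref{theorem:flex}, or an explicit computation.

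Making this identification is the main obstacle. What I would try first is a local-to-global argument. Stabilization is a contact connected sum of $(Y,\mathrm{OB}(P,\psi))$ with this model along a neighbourhood of $L$. The Legendrian double of $D^n$ is the standard unknot (remarked in the introduction), and all such half-open-book data are isotopic by Theorem \ref{theorem:flex}. So it suffices to check the claim in the model, where it follows by direct inspection of the Milnor fibre. The vanishing cycle there is the real sphere $\{z\in\R^{n+1}\}$, whose two hemispheres sit in opposite pages $\arg z_{n+1}=0,\pi$.

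\textbf{Step 3: read off the monodromy.} Going once around $S^1$, the ribbon returns to itself but with the halves $L_0$ and $L_\pi$ exchanged twice. In the model the resulting return map on the Milnor fibre is the Picard–Lefschetz monodromy, i.e. the Dehn twist $\tau_{\tilde L}$. Composed with $\psi$ on the old part, this gives the monodromy $\psi\circ\tau_{\tilde L}$, matching the stabilization described above, as claimed.
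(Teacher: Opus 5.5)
Your Step 1 is reasonable; it is essentially what the statement itself asserts about the smoothing. Step 2 is where the theorem actually lives, as you yourself note, and it is not established. Step 3 rests on it.

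\textbf{The family in Step 2 is not an open book.} Away from the binding, $\mathcal{R}^\theta_L$ contains Weinstein neighbourhoods of both $L_\theta\subset P_\theta$ and $L_{\theta+\pi}\subset P_{\theta+\pi}$. So $P_\theta\cup\mathcal{R}^\theta_L$ and $P_{\theta+\pi}\cup\mathcal{R}^{\theta+\pi}_L$ share an open piece of $P_{\theta+\pi}$. Moreover, the boundaries of these hypersurfaces move with $\theta$. A stabilized open book, by contrast, must have a single binding, namely $\partial P$ surgered along $\Lambda$, which differs from $\partial P$ near $\Lambda$. Hence there is no return map, and the ``halves exchanged twice'' reading of the monodromy has nothing to act on. Constructing honest pages (pairwise disjoint away from a common binding) is exactly the content to be proved.

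\textbf{The local-to-global fallback does not supply it.} Theorem \ref{theorem:flex} only gives a Legendrian isotopy of the double. It says nothing about $P$, the ribbon, or the open book near $\bigcup_\theta L_\theta$, and a contact isotopy carrying $L^d_{Leg}$ to the model unknot does not carry $P$ to the model page $D^{2n}$. Two things would be needed instead:
\begin{itemize}
\item a neighbourhood theorem for the open book near $\bigcup_\theta L_\theta$: a Weinstein neighbourhood of $L$ in the page, the normal form at the binding, and monodromy supported away from the relevant $\theta$-range;
\item a \emph{relative} model: a stabilization of $\mathrm{OB}(D^{2n},\mathrm{id})$ on $S^{2n+1}$ that agrees with the trivial open book outside that neighbourhood. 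This is essentially the Murasugi-sum description of stabilization, and it must be invoked and matched with $\mathcal{R}_L$ (``contact connected sum along a neighbourhood of $L$'' is not a precise substitute).
\end{itemize}
The Milnor open book of $z_1^2+\cdots+z_{n+1}^2$ is not such a model: its binding agrees with $\{z_{n+1}=0\}\cap S^{2n+1}$ on no open set. Also, observing that the real sphere has its hemispheres in $\arg z_{n+1}=0,\pi$ only shows that the Legendrian double lies in both hypersurfaces. It does not show that the smoothing of $D^{2n}\cup\mathcal{R}_{D^n}$ is isotopic through Liouville hypersurfaces to the Milnor page. For that you would need further arguments, for instance:
\begin{itemize}
\item that both are ribbons of the same unknot;
\item that ribbons are unique up to isotopy;
\item that a Liouville hypersurface isotopic to a page is again a page.
\end{itemize}

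\textbf{How the paper avoids this.} The paper never builds the new pages. It standardizes $L^d_{Leg}$ as an unknot whose standard pre-Lagrangian disk filling is disjoint from $P$. It then inserts a cancelling pair of Weinstein $n$- and $(n+1)$-handles whose $(n+1)$-attaching sphere crosses that disk once. Handle slides then reach the cobordism description of stabilization: a subcritical handle along $\partial L$ followed by a critical handle along $\tilde L$.
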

\begin{proof}
  Consider the exact pre-Lagrangian filling of half open book-type given by the Lagrangian disk $L$, which by Theorem \ref{thm:existence}  is the standard Lagrangian disk filling of the standard Legendrian sphere.
 
 We may thus assume that $P \cup L^d$ consists of an unknot union a Liouville hypersurface $P$ which is disjoint from the interior of its standard pre-Lagrangian disk filling. We then add a canceling Weinstein handles of index $n$ and $n+1$, where both attaching spheres are disjoint from $P \cup L^d$, but while the Legendrian attaching sphere of the $n+1$-handle passes through the pre-Lagrangian disk geometrically once transversely. See the left-hand side of Figure \ref{fig:handle slide} for a schematic picture in dimension 3. Using standard handle moves, we obtain the picture shown on the right, which corresponds to adding a subcritical handle attached along the boundary of $L$ followed by a critical handle attachment along the produced Legendrian. See Ding--Geiges \cite{ding2009handle} for the handle slide in dimension 3, and Casals--Murphy \cite{casals2019legendrian} for the higher dimensional version.
  A canceling handle pair is then introduced so as to pass through the Lagrangian disk filling. After performing a handle slide of the Legendrian sphere over the $n$-handle, the Legendrian sphere  becomes parallel to the critical handle, see Figure \ref{fig:handle slide}. Introducing a cancellation pair stabilizes the page of the open book, due to the discussion has been made above.
  \end{proof}

\noindent
\begin{figure}[htbp]
    \begin{minipage}{0.4\textwidth}
        \begin{tikzpicture}[scale=0.9, line width=1pt]
  \shade[ball color = gray!10, opacity = 0.3] (-2.7,0.2) circle (0.7);
\draw (-2.7,0.2) circle (0.7);
\draw[dashed] (-2.7,0.2) ellipse (0.7 and .2);
\shade[ball color = gray!10, opacity = 0.3] (2.7,0.2) circle (0.7);
\draw (2.7,0.2) circle (0.7);
\draw[dashed] (2.7,0.2) ellipse (0.7 and .2);
\draw (-2,0.2) -- (2,0.2);
\def\basecurve(#1,#2){
    plot[domain=-1:1, samples=200, smooth]
    (\x, {#1*(1-\x*\x)*sqrt(1-\x*\x)})
}
\foreach \y in {0.7} {
    \draw[red, line width=1pt]
       \basecurve(\y,0);
       \draw[blue, line width=2pt]
        plot[domain=-1:1, samples=200, smooth]
        (\x, {-\y*(1-\x*\x)*sqrt(1-\x*\x)});
}
\node[red] at (0.75,0.75) {$\Lambda_0$};
\node[blue] at (0.75,-0.75) {L};
\end{tikzpicture}
 \end{minipage}
 \hspace{2\baselineskip}
\begin{minipage}{0.4\textwidth}
    \begin{tikzpicture}[scale=0.9, line width=1pt]
         \shade[ball color = gray!10, opacity = 0.3] (-2.7,0.2) circle (0.7);
\draw (-2.7,0.2) circle (0.7);
\draw[dashed] (-2.7,0.2) ellipse (0.7 and .2);
\shade[ball color = gray!10, opacity = 0.3] (2.7,0.2) circle (0.7);
\draw (2.7,0.2) circle (0.7);
\draw[dashed] (2.7,0.2) ellipse (0.7 and .2);
\draw[red] (-2,0.2) -- (-1,0.2);
\draw[blue, line width=2pt] (-1,0.2)--(1,0.2);
\draw[red] (1,0.2)--(2,0.2);
\draw (-2,0.38) -- (2,0.38);
\node[red] at (1.5,-0.4) {$\Lambda'_0$};
\node[blue] at (0.1,-0.3) {L};
\draw[white] (-1,-0.7) -- (1,-0.7);
     \end{tikzpicture}
     \end{minipage}
    \caption{Left: A canceling pair consisting of an $n-1$ and $n$-surgery that links a standard Legendrian $n$-sphere $\Lambda_0$ (here $n=1$), by passing through the standard pre-Lagrangian filling of the standard sphere. Right: The effect of handle sliding the standard Legendrian sphere over the handle. The figure illustrates the three-dimensional case; however, an analogous handle slide exists in higher dimensions.}
    \label{fig:handle slide}
     \end{figure}
   \section{Legendrian suspension}
\label{suspension}
 One natural tool for constructing a wealth of Legendrians contained inside pages of open books was introduced in our previous work \cite{AY} and called \emph{Legendrian suspension}. This construction is used to produce a Legendrian sphere contained inside a $(p,q,2)$-Brieskorn variety that arises as a page of an open book decomposition. The construction is roughly made as follows. We start by taking \textit{suspension} of a Lefschetz fibration on $D^4$ with fibers Brieskorn variety of type $(p,q)$ and of a Legendrian $1$-knot which bounds a disk in $D^4$. The resulting Lagrangian is constructed in the Brieskorn variety of type $(p,q,2)$ by suspending the Lefschetz fibration and lifting the disk under a double branched cover. This construction can be generalized in order to produce a closed exact Lagrangians inside Seidel’s so-called suspended fiber of a Lefschetz fibration, \cite{Seidel}. For any Lefschetz fibration on a Liouville manifold $\hat{P}$ and a Legendrian submanifold in the boundary of ${P}$ which is sitting on a regular fiber of the Lefschetz fibration  and is fillable by a Lagrangian, we construct a Lagrangian submanifold in a regular fiber of the suspended Lefschetz fibration on $\hat{P}\times \mathbb{C}$ by lifting the Lagrangian filling under double branched cover. The construction takes any exact Lagrangian filling of a Legendrian and then suspends this filling to a closed exact Lagrangian contained inside a generic fiber of the suspended Lefschetz fibration. The result can thus be made into a Legendrian contained inside a page of an open book decomposition of the ideal contact boundary of $\hat{P}\times \C$.

We will now describe this construction in detail and with slightly more generality.
Let  $\hat{P}$ be a Liouville manifold which is given as the completion of the Liouville domain $({P},\eta)$  and $L$ be a Lagrangian submanifold  in $({P},\eta)$ which is cylindrical outside of a compact subset with  Legendrian boundary $\Lambda$. We now make the assumption that $L$ is strongly exact. If this is not the case, then one can simply modify the Liouville form $\eta$ in a compact subset.

Assume that $\pi: \hat{P}\to \mathbb{C}$ is a Lefschetz fibration on the Liouville manifold $\hat{P}$ such that the Lagrangian submanifold 
$\hat{L}$ is cylindrical outside of a compact subset and, moreover, projects to the line $(R-\epsilon,+\infty) \subset \C$ outside of the disk with radius $R-\epsilon$. On $\hat{P}$, we have the following coordinate:
$$t=\|{w}\circ \pi\|+\rho_F$$ 
where $t$ is the $\mathbb{R}_{>0}$ coordinate, $w$ is the  coordinate on the image $\mathbb{C}$ of the Lefschetz fibration $\pi$ and $\rho_F$ is defined on the fiber $\hat{F}$ of the Lefschetz fibration $\pi$. The map $\rho_F$ is defined to be  zero inside the fiber $\hat{F}$ and increasing in the cylindrical parts of the fibers. Outside of a compact subset of $\hat{P}$, the space $\hat{P}$ can be symplectically identified with $\hat{F} \times \C_w$, so that $\pi(x,w)=w$. Also, we can assume that $\Lambda$ is the subset of $\{\rho_F=0\}$ in the fiber. Outside of a compact subset $\hat{P}$ projects onto the set $\{\|w\|\geq \frac{R}{2}\}$.

Recall the definition of the \emph{suspension of the Lefschetz fibration $\pi$}, as defined by Seidel \cite{Seidel}, which is the Lefschetz fibration on the trivial product $\hat{P}\times \C$ given by $\pi^\sigma(p,z)=\pi(p)+z^2$ where $p\in \hat{P}$ and $z\in \mathbb{C}$. The generic fiber of $\pi^\sigma$ is called the \emph{suspension of $\hat{P}$}, and denoted by $\hat{P}^\sigma$. 

The double branched cover $\Pi:\hat{P}^\sigma\to \hat{P}$ along $\pi^{-1}(R)$ has two pre-image points for every point except on the branching locus given by the fiber $\pi^{-1}(R)$, where $\hat{P}^\sigma$ is the regular fiber $(\pi^\sigma)^{-1}(R)$ for $R\gg0$. If the Lagrangian filling $L$ has Legendrian boundary $\Lambda=\pi^{-1}(R) \cap \hat{L},$ and the cylindrical end of $\hat{L}$ projects to a line $(R-\epsilon,\infty)$ in $\C$ under the Lefschetz fibration $\pi$, then the two lifts of $L$ under the branched cover glue along the boundary to form a smooth embedded Lagrangian submanifold $L^\sigma \subset P^\sigma$. The Lagrangian property can be assumed to hold after a suitable deformation of the symplectic structure. Namely, since $\Pi$ is a holomorphic branched cover,  we may assume that it is an exact symplectomorphism outside of some neighborhood of the branching locus; thus $L^\sigma$ is Lagrangian there. Near the branching locus we may assume that the symplectic form is chosen so that the pre-image of $L$ is Lagrangian.

Combining the preceding constructions, we arrive at the following definition of the \emph{Legendrian suspension}.

\begin{definition}\label{definition:Leg_suspension}
    Let $\pi\colon \hat{P} \to \C$ be a Lefschetz fibration on the completion $\hat{P}$ of $(P,\eta)$, and $\hat{L}$ the completion of an exact filling $L \subset (P,\eta)$ with Legendrian boundary equal to $\pi^{-1}(R) \cap \hat{L}$, such that the projection of the cylindrical end of $\hat{L}$ under $\pi$ coincides with a line $(R-\epsilon, +\infty)$. Consider the induced open book on the ideal boundary $\partial_{\infty}(\hat{P}\times \C)$ corresponding to the suspended Lefschetz fibration $\pi^{\sigma}\colon \hat{P}\times \C \to \C$ with pages branched covers of $\pi^{-1}(R)$ for $R\gg0$. For a suitable deformation of the pages as in Lemma \ref{lemma_def_ob} the suspension $L^\sigma$, which is given by the pre-image of $L$ under the branched cover $P^\sigma \to P$, is strongly exact. In particular, $L^\sigma$ is an embedded Legendrian contained in a page, which we call the \emph{Legendrian suspension} of the Lagrangian filling $L$.
\end{definition}
The suspended fiber $\hat{P}^\sigma$ is obtained from $\hat{P}$ by attaching critical Weinstein handles along the boundaries of all Lefschetz thimbles of $\pi \colon \hat{P} \to \C$. Further, the Lefschetz fibration $\pi^\sigma $ is obtained from the Lefschetz fibration $\hat{P} \times \C$ by attaching critical Weinstein handles along the spheres formed by the Lefschetz thimbles union the latter Weinstein handles in $\hat{P}^\sigma$. See Seidel \cite{Seidel} for more details. This deformation corresponds to deforming the trivial open book induced by the trivial Lefschetz fibration on $\hat{P} \times \C$ by stabilization. More precisely:
\begin{proposition}\label{prop_stab_suspension}
    The  open book decomposition with page $P^\sigma$ induced by the Lefschetz fibration $\pi^\sigma$ is obtained from the trivial open book decomposition on $\partial_\infty(\hat{P}\times \C)$ with page $P$ by stabilizing each Legendrian sphere in $\partial P$ equal to a vanishing cycle of the Lefschetz fibration $\pi \colon \hat{P} \to \C$.
\end{proposition}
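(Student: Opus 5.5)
The plan is to compare the two Lefschetz fibrations on the same total space $\hat{P}\times\C$: the trivial one $\operatorname{proj}_\C(p,z)=z$ and Seidel's suspension $\pi^\sigma(p,z)=\pi(p)+z^2$. Both induce open books on $\partial_\infty(\hat{P}\times\C)$ in the sense of Example \ref{ex:induced_ob}. I would first reduce to a local statement near the Lefschetz thimbles, by rewriting $\pi^\sigma$ as a perturbation of the fibration $w+z^2$ on $\hat F\times\C_w\times\C_z$ outside a compact set. By the description recalled just before the proposition, the regular fiber $\hat{P}^\sigma$ is $\hat{P}$ with critical Weinstein handles attached along the boundaries of the thimbles $\Delta_j$ of $\pi$. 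Concretely, the Legendrian spheres $V_j\subset \partial P$ are the vanishing cycles, transported to the fiber at $R$ and then into the binding. The total fibration $\pi^\sigma$ then differs from $\operatorname{proj}_\C$ by the extra critical points $(r_j,0)$, each with vanishing cycle the sphere $\tilde V_j=\Delta_j\cup(\text{core of the new handle})$ in $\hat P^\sigma$.

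The next step is to show that this is exactly the abstract stabilization described in Section \ref{section_symp_ribbon}. On the page level, $P^\sigma=P\cup\bigcup_j H_j$, where $H_j$ is a Weinstein $n$-handle attached along $\partial\Delta_j=V_j$. The thimble $\Delta_j\cap P$ is an exact Lagrangian disk filling of $V_j$ in the page, so I would apply Theorem \ref{prop_stab_ob} to it. That theorem identifies $P\cup\mathcal{R}_{\Delta_j}$, the page union the symplectic ribbon of the Legendrian double of $\Delta_j$, with the page after stabilizing along $\Delta_j$. On the monodromy level, the monodromy of $\pi^\sigma$ around a large circle is the composition of Dehn twists $\tau_{\tilde V_j}$ about the new vanishing cycles (Picard--Lefschetz), since $\operatorname{proj}_\C$ has trivial monodromy. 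This is exactly $\mathrm{id}\circ\tau_{\tilde V_1}\circ\dots\circ\tau_{\tilde V_k}$, the iterated stabilization monodromy $\psi\circ\tau_{\tilde L}$ with $\tilde L=\tilde V_j$. I would then perform the stabilizations one at a time in the order of a distinguished basis of vanishing paths. The disks $\Delta_j$ should be chosen pairwise disjoint in $P$, using disjoint vanishing paths, so that the successive stabilizations do not interfere.

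Finally, I would need to check that the contact structures agree and not only the abstract open books. Each open book is supported by the restriction of the Liouville form of $\hat P\times\C$ to a large level set, and both are compatible with that same contact manifold. By Giroux's correspondence together with the stabilization discussion, both are therefore stabilization-equivalent representations of $\partial_\infty(\hat P\times\C)$. To make the identification concrete, I would interpolate between the two fibrations by the family $\pi_s(p,z)=s\,\pi(p)+z^2$ composed with the rescalings relating $z^2$ to $z$. I expect this to be the main obstacle. The map $z\mapsto z^2$ is a double cover, so the trivial open book for $z^2$ is the trivial open book for $z$ traversed twice and has doubled pages. Identifying the page of $\pi^\sigma$ with $P$ union handles therefore requires the branched-cover description $\hat P^\sigma\to\hat P$ branched over $\pi^{-1}(R)$. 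I would handle this by working only in a neighborhood of the binding plus the thimbles, where the local model $w+z^2$ on $\C^n$ can be written down explicitly and matched with the standard stabilization model of \cite{vanKoert2010}.
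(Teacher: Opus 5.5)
Your core argument is the one in the paper: the page of $\pi^\sigma$ is $P$ with critical handles attached along the $V_j$. The vanishing cycles of $\pi^\sigma$ are the doubled thimbles $\tilde V_j=\Delta_j\cup\mathrm{core}_j$. Picard--Lefschetz then gives monodromy $\prod_j\tau_{\tilde V_j}$, which is the iterated stabilization monodromy starting from $\mathrm{id}$.

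The paper compares page and monodromy as abstract open books and stops there. Theorem \ref{prop_stab_ob} is not needed, and your last paragraph can be dropped, for three reasons:
\begin{itemize}
\item an identification of abstract open books already identifies the contact manifolds;
\item your interpolation degenerates at $s=0$ to a map whose fibres are two disjoint copies of $\hat P$;
\item Giroux's correspondence would only give a common stabilization, not the specific one claimed.
\end{itemize}

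The step that fails is choosing the disks $\Delta_j$ pairwise disjoint in $P$ ``so that the successive stabilizations do not interfere''. If the $\Delta_j\subset P$ are pairwise disjoint with disjoint boundaries, the spheres $\Delta_j\cup\mathrm{core}_j$ are pairwise disjoint and their Dehn twists commute. But the vanishing cycles of $\pi^\sigma$ for a distinguished basis are double covers of thimbles that all end on the branch fibre $\pi^{-1}(R)$, so they meet above $V_i\cap V_j$.

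A concrete check is $\pi(x,y)=x^3-3x+y^2$ on $\C^2$, with $P=D^4$. The fibre is a punctured torus whose vanishing cycles $a,b$ meet once, and $\hat P^\sigma$ is the $A_2$ Milnor fibre, the plumbing of two copies of $T^*S^2$. Its intersection form has off-diagonal entry $\pm1$ and determinant $3$. Stabilizing $(D^4,\mathrm{id})$ along two disjoint Lagrangian disks instead attaches two $(-2)$-framed handles along a link whose linking number equals the intersection number of the disks, namely $0$. That gives the form $(-2)\oplus(-2)$ of determinant $4$, so even the pages differ.

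When you push the $V_j$ off the common fibre into distinct pages of the open book on $\partial P$, the order of those pages is dictated by the distinguished basis; it is the order in which the handles of $\hat P^\sigma$ are attached. With that order the boundary link is linked; in the example it has linking number $\pm1$. Any fillings in $P$ are then forced to intersect in the interior, above $V_i\cap V_j$. Pushing in the order that keeps the disks disjoint produces a different link and a different open book.

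The fix is as follows:
\begin{itemize}
\item keep the thimbles of a distinguished basis;
\item realize their boundaries as Legendrians in the prescribed order;
\item accept that the disks intersect;
\item stabilize successively in the order of the basis.
\end{itemize}
Then $\tilde L_i\cap\tilde L_j$ reproduces $\tilde V_i\cap\tilde V_j$, the twists no longer commute, and only then does the Picard--Lefschetz product agree with the stabilization monodromy.
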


\begin{proof}
    We start with the trivial open book $({P}, \eta, id)$ as induced from the trivial Lefschetz fibration on $\hat{P}\times \mathbb{C}$. The suspension of the Lefschetz fibration produces the fiber $\hat{P}^\sigma=(\pi^\sigma)^{-1}(R)$.  Note that here the Lefschetz fibration $\pi^\sigma$ has as many critical values as of the Lefschetz fibration $\pi$. This leads us to the observation that the suspended fiber $\hat{P}^\sigma$ is obtained by attaching that many Lefschetz handles to the fiber $\hat{P}$ of the trivial Lefschetz fibration.
    By the discussion in the Section \ref{suspension}, the vanishing cycles of the Lefschetz fibration $\pi^\sigma$ arises as the double branched cover those of $\pi$. The vanishing cycles in $\hat{P}^\sigma$ are isotopic to the suspension of the vanishing cycles of the former Lefschetz fibration $\pi$. The page ${P}^\sigma$ of the induced open book is formed by attaching half-dimensional symplectic handles to the page ${P}$ and the monodromy is given by the positive Dehn twists about the suspended vanishing spheres. Hence, the new induced page is a multiple stabilization of the trivial Lefschetz fibration.   
\end{proof}

\subsection{Proof of Theorem \ref{theorem:leg_isotopy}}
\begin{proof}
The suspended Lefschetz fibration on the Liouville manifold $(\hat{P} \times \mathbb{C}, \lambda)$ is given by the map $$\pi^\sigma \colon \hat{P} \times \C \to \C, \qquad \pi^\sigma(p,z) := \pi(p) + z^2,$$
where $\pi \colon \hat{P} \to \C$ is the Lefschetz fibration on  $\hat{P}$, as described at the beginning of Section~\ref{suspension}. We deform the Lefschetz fibration $\pi
^\sigma$ through Lefschetz fibrations denoted by $ \pi^\sigma_t$ where $ \pi^\sigma_0= \pi^\sigma$ and $\pi^\sigma_t(p,z)=\pi_t(p)+z^2$. We proceed to define $\pi_t$.

Consider a smooth cut-off function $\rho:{\C}\to [0,1]$ satisfying
$$\rho(z)=\begin{cases}
     1& \text{if}\quad |z|\geq R-\epsilon,\\
      0& \text{if}\quad  |z|<R-2\epsilon.
\end{cases}$$
Using this cut-off function, define the perturbed suspension map
$${\pi}^\sigma_t(p,z)=(t\cdot\rho(\pi)+(1-t))\pi(p)+z^2.$$
This map can be readily seen to be a Lefschetz fibration when restricted to the pre-image of $\C \setminus D^2_{R-\epsilon}$ for each value $t \in [0,1]$ of the parameter. Moreover, the Lefschetz fibers are deformations of the original Lefschetz fibers over the same subset. There is an corresponding deformation of the induced compatible open book of the ideal contact boundary of $\hat{P} \times \C$.

There is again a branched cover ${(\pi^{\sigma}_t)^{-1}}(R) \to \hat{P}$. The suspension of $L^{\sigma}$ can be identified with the lift of $L$ under these branched covers. In addition, when $t=1$, this Legendrian can be seen to coincide with the Legendrian boundary at infinity of $\hat{L} \times \R$ for a suitable choice of contact-type hypersurface.

In the proof of Theorem~\ref{thm:flex_trivial_fib}, we show that the latter ideal boundary of \(\hat{L}\times\mathbb{R}\) inside  the ideal boundary $\partial_\infty(\hat{P}\times \C)$, as described in Example~\ref{ex:induced_ob}, coincides with the Legendrian double \(L^d_{Leg}\). Consequently, the corresponding Legendrian isotopy follows as desired.
  \end{proof}
   \nocite{*}
\bibliographystyle{alpha}
\bibliography{references}
\bigskip
\end{document}